\documentclass[11pt, a4paper]{amsart}

\usepackage{a4wide}
\usepackage{graphicx,enumitem}
\usepackage{amsmath,amssymb}
\usepackage[english]{babel}
\usepackage{subfigure}
\usepackage{units}
\usepackage{diagbox}
\usepackage{fontenc}
\usepackage{xcolor}
\usepackage{caption}
\usepackage{ulem}
\normalem
\usepackage{cancel}
\usepackage{tikz}
\usepackage{amssymb}
\DeclareSymbolFontAlphabet{\amsmathbb}{AMSb}%
\usepackage{mathbbol}
\usepackage{latexsym}
\usepackage{xspace,bm}
\usetikzlibrary{positioning}
\usetikzlibrary{arrows,decorations.markings}
\usetikzlibrary {arrows.meta,bending}
\usetikzlibrary{external}
\usepackage[color=yellow]{todonotes}

\usepackage[pdftex,
bookmarksopen,
colorlinks,
linkcolor=black,
urlcolor=black,
citecolor=black
]{hyperref}
\hypersetup{pdfauthor={S. Ephrati, E. Jansson, A. Lang, E. Luesink}}

\usepackage{cleveref}

\usepackage{dsfont}

\usepackage{cases}

\usepackage{mathtools}

\newcommand{\R}{\amsmathbb{R}}

\newcommand{\C}{\amsmathbb{C}}






\newcommand{\cO}{\mathcal{O}}

\DeclareMathOperator{\E}{\amsmathbb{E}} 

\DeclareMathOperator{\trace}{Tr}

\DeclareMathOperator{\Diag}{Diag}
\newcommand{\dd}{\mathrm{d}}

\newtheorem{lemma}{Lemma}[section]

\newtheorem{theorem}[lemma]{Theorem}
\theoremstyle{remark}
\newtheorem{remark}[lemma]{Remark}

\theoremstyle{definition}


\usepackage{color}
\definecolor{darkgreen}{rgb}{0,.6,0}

\newcommand{\sclass}[1]
{
  \small	
  \textbf{Mathematics Subject Classification } #1
}

\title[Implicit midpoint method for stochastic Lie--Poisson systems]{An exponential map free implicit midpoint method for stochastic Lie--Poisson systems}

\author{Sagy Ephrati}
\author{Erik Jansson}
\author{Annika Lang}
\address[Sagy Ephrati, Erik Jansson, Annika Lang]{Department of Mathematical Sciences, Chalmers University of Technology and University of Gothenburg, 412 96 Gothenburg, Sweden}
\email{\{sagy, erikjans, annika.lang\}@chalmers.se}
\author{Erwin Luesink}
\address[Erwin Luesink]{Korteweg-De Vries Institute, University of Amsterdam, PO Box 94248, Science Park 107, 1090 GE Amsterdam, The Netherlands}
\email{e.luesink@uva.nl}


\date{}

\begin{document}

\maketitle

\begin{abstract}
An integrator for a class of stochastic Lie--Poisson systems driven by Stratonovich noise is developed. The integrator is suited for Lie--Poisson systems that also admit an isospectral formulation, which enables scalability to high-dimensional systems. Its derivation follows from discrete Lie--Poisson reduction of the symplectic midpoint scheme for stochastic Hamiltonian systems. We prove almost sure preservation of Casimir functions and coadjoint orbits under the numerical flow and provide strong and weak convergence rates of the proposed method. The scalability, structure-conservation, and convergence rates are illustrated numerically for the (generalized) rigid body, point vortex dynamics, and the two-dimensional Euler equations on the sphere.
\end{abstract}

\sclass{60H35, 60H10, 65C30, 65P10, 37M15}

\section{Introduction}
Recent years have seen a growing interest in the development of geometric numerical methods in science and engineering, and the concurrent use of stochastic forcing to quantify uncertainty in dynamical systems. Geometric methods arise naturally in conservative mechanics that feature a Hamiltonian or Lagrangian structure, and aim to preserve numerically certain physical laws that arise from this mathematical structure. This paradigm has encouraged the development of novel methods for, e.g., plasma physics to research nuclear fusion \cite{kraus2021variational, kraus2017gempic}, and geophysical fluid dynamics to study planetary flows \cite{franken2023zeitlin, wimmer2020energy, zeitlin2004self}. Structure-preserving integrators have shown to provide accurate statistical results for long-time simulations of complex systems, progressing theoretical research in fluid dynamics \cite{modin2020casimir, modin2022canonical} and numerically confirming statistical predictions in turbulent flows \cite{cifani2022casimir}. Stochastic fluid dynamics has been an active area of research over the last two decades \cite{flandoli2002probabilistic, albeverio2008introduction}. The wish to include stochasticity whilst retaining the mathematical structure of the underlying equations has motivated the study of transport noise. In \cite{flandoli2022additive}, it is shown that one can obtain noise of transport type also from additive noise. In the work by Holm \cite{holm2015variational} a certain type of transport noise referred to as `stochastic advection by Lie transport' (SALT) was introduced. This noise aims to model unknown or unresolvable dynamics as a stochastic forcing. Since its introduction, SALT has found meaningful applications in geophysical fluid dynamics with research consisting of model development \cite{holm2021stochastic, holm2023deterministic}, theoretical analysis \cite{flandoli2021scaling, flandoli2020convergence, goodair20223d}, and numerical studies on uncertainty quantification \cite{cotter2019numerically, ephrati2023data, cifani2022sparse} and data assimilation \cite{cotter2020particle, cotter2020data, lang2022bayesian}.

We develop a new numerical integrator for a class of Lie--Poisson systems subject to perturbations by transport noise. 
A unique feature of the integrator is its scalability to higher dimensions while not being limited to separable Hamiltonians, providing a major benefit embodied in computational efficiency and wider applicability. 
Furthermore, we provide a rigorous derivation of the integrator along with error estimates and convergence proofs, by combining computational geometric mechanics with stochastic analysis.
LP systems arise through symmetry reduction of Hamiltonian systems and are further detailed below. One of themathematical structures of these systems manifests itself in conserved quantities referred to as Casimir functions.  
The main proven results are informally summarized as follows. 
\begin{itemize}
    \item The integrator is a \emph{Lie--Poisson integrator} and preserves Casimir functions. 
    \item The integrator converges with root mean squared order $1/2$ and weak order $1$. 
\end{itemize}
Specifically, the integrator is designed for LP systems that admit identification with isospectral formulations. This is the case when the underlying Lie groups are $J$-quadratic, as shown in Section \ref{sec:geometry}. The identification with isospectral flows enables efficient numerical algorithms for the solution of stochastic LP equations. In particular, we develop an integrator based on a stochastic implicit midpoint scheme for suitable isospectral Hamiltonian flow and free of computationally expensive algebra-to-group maps. This integrator can be applied to stochastic canonical low-dimensional examples such as the rigid body and point vortex dynamics, as shown in \Cref{fig:eyecandy_1}. 
A major benefit of the integrator compared to those based on the exponential map or other algebra-to-group maps (see e.g., \cite{luesink2024casimir, gay2024variational}) is its scalability to higher dimensions. This is illustrated in \Cref{fig:eyecandy_2}, showing snapshots of a high-resolution numerical simulation of the stochastic two-dimensional Euler equations on the sphere, enabling the study of transport noise in turbulence \cite{ephrati2025diffusive}.
In addition, we prove convergence rates of the integrator and demonstrate convergence in numerical tests. 

Hamiltonian descriptions of conservative mechanical systems allow for coordinate-in\-de\-pen\-dent formulations. In these formulations, many canonical systems encountered in classical mechanics can be defined on the cotangent bundle of a Lie group. This means that the position variables, here denoted by $Q$, are represented as an element of a Lie group, and simultaneously the conjugate momenta, denoted by $P$, are represented as an element of the cotangent space at $Q$. In particular, if the corresponding Hamiltonian is invariant with respect to the action of the Lie group, these systems give rise to LP systems through so-called Lie--Poisson reduction, see \cite{marsden1974reduction}. Geometric integrators for LP systems typically intend to preserve the Casimir functions or, equivalently, the symplectic foliation. In the current study, we derive an integrator for a class of stochastic LP systems that preserves the Casimir functions to machine precision.

The stochastic LP systems studied in this paper are derived from stochastic Hamiltonian systems. Stochastic Hamiltonian mechanics were first introduced by Bismut \cite{bismut1982mecanique} on symplectic vector spaces. Lazaro--Cami and Ortega \cite{lazaro2007stochastic} extended these results to stochastic Hamiltonian mechanics on Poisson manifolds while using general continuous semimartingales. Furthermore, they showed that stochastic mechanics equations arise as a critical point of a stochastic action. The work by Bou--Rabee and Owhadi \cite{bou2009stochastic} considered the special cases of stochastic Hamiltonian systems driven by Wiener processes and was able to prove that an extremum of the stochastic action satisfies the stochastic Hamiltonian equations. This has spurred the research of variational integrators for stochastic canonical Hamiltonian equations \cite{bou2009stochastic}. Additional results include the development of high-order symplectic methods (e.g., \cite{deng2014high, anton2014symplectic, ma2012symplectic, wang2017construction}), energy- and drift-preserving methods \cite{cohen2014energy, chen2020drift}, and extensions to general stochastic Hamiltonian systems \cite{holm2018stochastic} and diffusive Hamiltonian systems \cite{kraus2021variational}.

Integrators for stochastic LP systems are often derived in the same manner as their deterministic counterparts. For an overview of deterministic geometric methods for LP systems, we refer to the textbook \cite{hairer2006geometric} and review papers \cite{iserles2000lie, celledoni2014introduction, marsden2001discrete}. The recent work \cite{brehier2023splitting} presented a stochastic LP integrator based on splitting techniques, yielding an efficient explicit integrator for a class of LP systems driven by Stratonovich noise. The integrator developed by \cite{luesink2024casimir} extended Runge--Kutta Munthe--Kaas (RKMK) schemes \cite{munthe1999high, engo2001numerical} to stochastic systems that have an LP Hamiltonian formulation. An alternative approach based on the Darboux--Lie theorem was presented by \cite{hong2021structure}. The approach maps the LP equations to canonical stochastic Hamiltonian systems, which are discretized via specialized symplectic schemes, after which the inverse coordinate transform is applied to obtain the LP equations. In the present study, we employ the discrete LP reduction method presented by \cite{modin2020lie} to derive an integrator for stochastic LP systems.

The presented integrator is derived using tools from geometric mechanics. Furthermore, the proofs for convergence of the integrator and the desired conservation properties leverage the geometric structure of LP systems and the integrator. We note that the results presented in this paper may readily be extended to a larger class of stochastic LP integrators based on symplectic Runge--Kutta methods, in the same fashion as higher-order deterministic LP integrators can be derived in the deterministic setting \cite{modin2020lie}. To achieve this, one can adopt symplectic integrators for the stochastic canonical Hamiltonian system, besides the implicit midpoint scheme. Examples of such schemes were developed by \cite{milstein2002numerical}. Subsequently, the process of discrete Lie--Poisson reduction described in the current paper can be repeated for the adopted symplectic integrator to derive a Lie--Poisson integrator.

The paper is structured as follows. In \Cref{sec:geometry}, we provide the necessary theory from geometric mechanics that is used in the derivation and analysis of the integrator. Subsequently, the stochastic Lie--Poisson systems are presented in \Cref{sec:stoch_lp}. The integrator for stochastic Lie--Poisson systems is derived in \Cref{sec:isomp} followed by error analysis and convergence proofs in \Cref{sec:error_analysis}. Numerical examples are provided in \Cref{sec:examples} and we conclude the paper in \Cref{sec:conclusions}.

\begin{figure}
    \centering
    \includegraphics[width=0.48\textwidth]{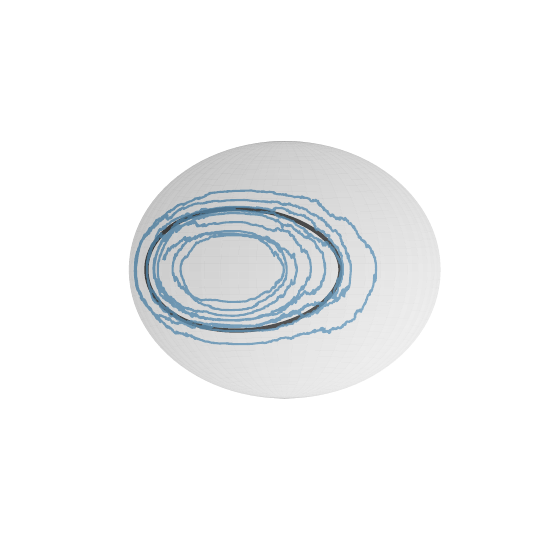}
    \includegraphics[width=0.48\textwidth]{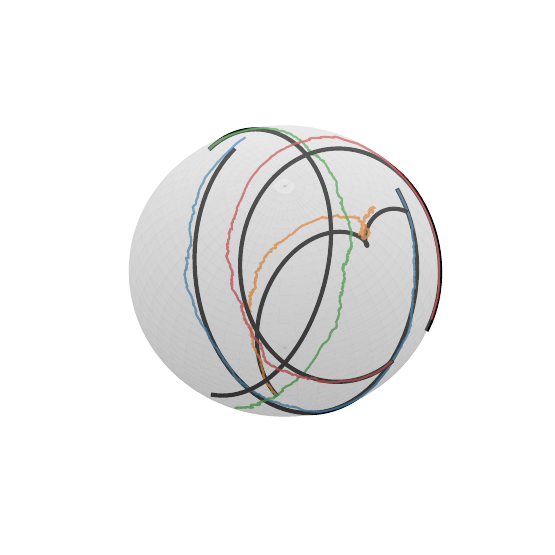}
    \vspace{-20mm}
    \caption{Left: Trajectories of components of the angular velocity in the rigid body equations. The lines show numerical solutions to the deterministic equations (black) and the stochastic equations (blue). Note that both trajectories remain on the unit sphere, depicted in gray. Right: Trajectories of four point vortices on the sphere, following the deterministic dynamics (black) and stochastic dynamics (colored).}
    \label{fig:eyecandy_1}
\end{figure}

\begin{figure}
    \centering
    \includegraphics[width=\textwidth]{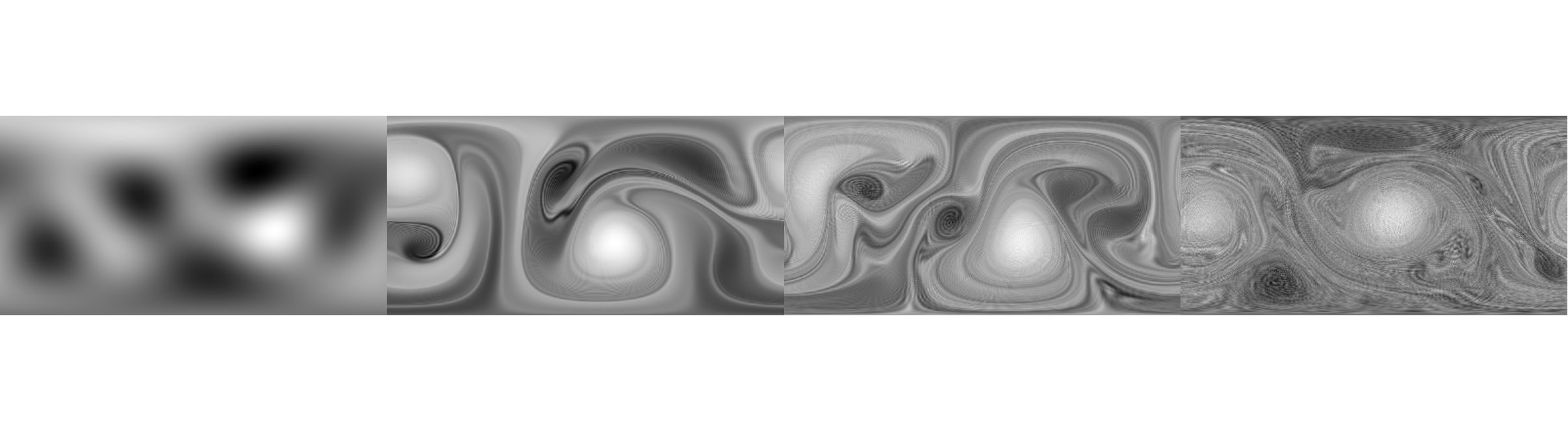}\vspace{-10mm}
    \caption{Instantaneous vorticity snapshots of the stochastic two-dimensional Euler equations on the sphere, presented on a latitude-longitude grid. The stochastic LP system evolves on $\mathfrak{su}(512)^*$. Shown are the initial condition consisting only of large-scale components (left); turbulent mixing during a transitory phase (middle left, middle right); formation of large-scale vorticity condensates (right).}
    \label{fig:eyecandy_2}
\end{figure}

\section{Geometric structure of deterministic Lie--Poisson systems}
\label{sec:geometry}
In this section, we introduce several concepts from geometric mechanics central to the derivation of the stochastic integrator in Section~\ref{sec:isomp}, and in particular, we introduce Lie--Poisson systems and their geometric structure. One may consult textbooks on geometry and mechanics \cite{frankel2011geometry,holm2009geometric, marsden2013introduction} for a more detailed theoretical background of the material presented here.

Let $G$ be a Lie group with Lie algebra $\mathfrak{g}$. 
Given a Hamiltonian function $H: \mathfrak{g}^* \to \R$, the corresponding Lie--Poisson flow on $\mathfrak{g}^*$, the dual of the Lie algebra, is
\begin{align}
\label{eq:lie_poisson_deterministic_adform}
 \dot X_t = \operatorname{ad}^*_{\nabla H(X_t)} X_t,
\end{align}
where $\nabla H$ is the gradient of $H$ and the operator $\operatorname{ad}^*:\mathfrak{g} \times \mathfrak{g}^* \to \mathfrak{g}^*$ is given by 
\begin{align}
    \label{eq:adstardef}
    \langle \operatorname{ad}_U^*X,V  \rangle =  \langle X,[U,V]  \rangle.
\end{align}
Here $X\in \mathfrak{g}^*$, $U,V \in \mathfrak{g}$, $ \langle  \cdot,\cdot\rangle$ denotes the dual pairing between $\mathfrak{g}^*$ and $\mathfrak{g}$, and $[\cdot,\cdot]$ is the Lie bracket. 

Throughout this paper, the underlying groups are compact simply connected matrix Lie groups $G\subseteq \operatorname{GL}(n, \C)$ whose Lie algebras $\mathfrak{g}\subseteq \mathfrak{gl}(n, \C)$ are assumed to be $J$-quadratic. 
Let $A^*$ denote the conjugate transpose of a matrix $A \in \C^{n\times n}$. 
A matrix Lie algebra $\mathfrak{g}$ is $J$-quadratic if there exists a matrix $J$ such that all $A\in\mathfrak{g}$ satisfy 
\begin{equation}
    \label{eq:j-quad}
    A^* J + JA = 0
\end{equation}
with $J^* =\pm J $ and $J^2 = cI_n$, for $c\in\R$ non-zero, where $I_n$ denotes the $n\times n$ identity matrix.
An important fact for our purposes is that $J$-quadratic Lie algebras are reductive. This implies that the Lie algebra is closed under conjugate transpose. Indeed, if $A \in \mathfrak{g}$, then $A^*$ satisfies 
\begin{align*}
    AJ + JA^* = \pm(JA^* + AJ)^* = 0.  
\end{align*}
Note that \Cref{eq:j-quad} implies that $G$ is defined by the quadratic constraint 
\begin{equation}
    \label{eq:quadconst}
    Q \in G \iff Q^* JQ = J. 
\end{equation}

Examples of $J$-quadratic Lie algebras include $\mathfrak{su}(n)$, $\mathfrak{sp}(n)$ and $\mathfrak{so}(n)$, respectively, consisting of traceless skew-Hermitian $n\times n$ matrices, $2n\times 2n$ matrices $A$ that satisfy $A^*J + JA = 0$ with $J$ the symplectic matrix, and skew-symmetric $n\times n$ matrices. The Lie algebra $\mathfrak{g}$ is identified with its dual $\mathfrak{g}^*$ through the pairing $\langle\cdot, \cdot\rangle: \mathfrak{g}^* \times \mathfrak{g} \to \R$. 
We adopt the Frobenius inner product 
\begin{equation}
    \label{eq:frobeniusip}
    \langle X, V \rangle = \trace (X^* V)
\end{equation}
as the pairing, where $X^*$ indicates the conjugate transpose of $X$. 
Since $G$ is assumed compact, a bi-invariant metric exists, allowing the identification of $\mathfrak{g}$ with $\mathfrak{g}^*$.
The norm associated to the Frobenius inner product is denoted by $\|\cdot\|$.

One major reason for these assumptions is that if the Lie algebra is closed under conjugate transpose, as is the case for reductive Lie algebras, the Lie--Poisson equations are \emph{isospectral}.
Indeed, combining \Cref{eq:frobeniusip} and \Cref{eq:adstardef}, we have that $
 \operatorname{ad}_U^* X=  \Pi[U^*, X] $, 
where $\Pi:\mathfrak{gl}(N,\C) \to \mathfrak{g}$ is the orthogonal projection from all matrices down to the Lie algebra. 
If, however, $X^* \in \mathfrak{g}$ for all $X \in \mathfrak{g}$, i.e., the Lie algebra is closed under conjugate transpose, then $\Pi[X^*, V] = [X^*, V]$ and the system \eqref{eq:lie_poisson_deterministic_adform} becomes the isospectral equation 
\begin{equation}
\label{eq:lie_poisson_deterministic}
    \dot X_t = [\nabla H(X_t)^*,X_t], 
\end{equation}
where the gradient of $H$ is taken with respect to the adopted inner product. Note that here the assumption that the algebra is $J$-quadratic is essential, as it guarantees that $\nabla H(X)^*$ remains in the (dual of) the algebra, meaning that \Cref{eq:lie_poisson_deterministic} is a Lie--Poisson system. 

By assuming that $G$ is compact and simply connected, we can formulate concise proofs of the theoretical results in this paper.

In what follows, the cotangent bundle of a Lie group is denoted by $T^* G$ with elements~$(Q, P)$. The left action of $G$ on $T^* G$ is given by \begin{equation}
    g\cdot (Q, P) = (gQ, (g^{-1})^*P), \quad g\in G.
\end{equation}
In many mechanical systems, this action $\Phi\colon G\times T^*G\to T^*G$ is the cotangent lift of the group action and is often generated by a Hamiltonian $\tilde{H}\colon T^*G\to\R$. If this Hamiltonian is itself invariant under the left or right multiplication by an element of the Lie group, then symmetry reduction is possible \cite{marsden1974reduction}. The result of this symmetry reduction is that the equations of motion associated with the action $\Phi$ can be expressed on~$\mathfrak{g}^*$. 

We consider Hamiltonian functions $\tilde{H}\colon T^*G \to \R$ that are invariant under the left actions, i.e., $\tilde{H}(Q, P) = \tilde{H}(g\cdot (Q, P))$
and can therefore be given by the reduced Hamiltonian $H\colon\mathfrak{g}^*\to \R$ using $H(\mu(Q, P)) = \tilde{H}(e, \mu(Q, P)) = \tilde{H}(Q, P)$. Here, $e$ is the identity element of the group and $\mu\colon T^* G \to \mathfrak{g}^*$ is the momentum map on the considered cotangent bundles, given by  \begin{equation}
    \mu(Q, P) = \frac{1}{2}Q^* P - \frac{1}{2c}JP^* Q J.
    \label{eq:momentum_map}
\end{equation}
By applying the momentum map $\mu$ to elements of $T^*G$, we formally achieve a process of  Lie--Poisson reduction.

The evolution of the momentum map is given by the Lie--Poisson system \eqref{eq:lie_poisson_deterministic} formulated on the dual of the Lie algebra where $X_t=\mu(Q_t, P_t)$. The canonical Hamiltonian system corresponding to $\tilde H$ may be obtained from the Lie--Poisson system \eqref{eq:lie_poisson_deterministic} via the reconstruction equations \cite[Proposition 9.18]{holm2009geometric} \begin{equation}
    \begin{split}
        \dot{Q}_t &= Q_t\nabla H(X_t), \\
        P_t &= (Q_t^{-1})^* X_t.
    \end{split}
    \label{eq:reconstruction}
\end{equation} 

The solution $X_t$ of \eqref{eq:lie_poisson_deterministic} lives on a coadjoint orbit, denoted by $\mathcal{O}_{X_0}$ and defined by the initial condition $X_0$ as \begin{equation}
    \mathcal{O}_{X_0} = \left\{ g^* X_0 (g^*)^{-1} | g\in G \right\}.
    \label{eq:coadjoint_orbit}
\end{equation}
The motion on the coadjoint orbit is generated by the Lie--Poisson system $\eqref{eq:lie_poisson_deterministic}$, i.e., $g$ in \eqref{eq:coadjoint_orbit} is time-dependent and solves the differential equation \begin{equation}
    \dot g_t =  g_t \nabla H(X_t) .
    \label{eq:generator_deterministic}
\end{equation}

By inserting \eqref{eq:lie_poisson_deterministic} and the momentum map \eqref{eq:momentum_map} into the reconstruction equations \eqref{eq:reconstruction}, the canonical Hamiltonian system on the cotangent bundle of quadratic Lie groups is obtained. The system is given by 
\begin{equation}
    \begin{split}
        \dot{Q}_t &=Q_t\nabla H(\mu(Q_t,P_t)), \\ 
        \dot{P}_t &= -P_t\nabla H(\mu(Q_t,P_t))^*.
    \end{split}
    \label{eq:canonical_hamiltonian_deterministic}
\end{equation}
The equation of this system can also be written in terms of $\tilde{H}$ using the invariance under the group action. 
\begin{remark}
    \label{rem:canonical_hamiltonian_deterministic}
    The canonical Hamiltonian system \eqref{eq:canonical_hamiltonian_deterministic} is a system on the vector space $\R^{n\times n} \times \R^{n \times n}$, of which $T^*G$ is a submanifold. 
    However, the reconstruction procedure of \cite[Proposition 9.18]{holm2009geometric} ascertains that $(Q_t,P_t) \in T^*G$. 
    In particular, this means that $Q_t^* J Q_t$ is a quadratic invariant of the canonical Hamiltonian system \eqref{eq:canonical_hamiltonian_deterministic}. 
\end{remark}

Lie--Poisson systems have conserved quantities known as \emph{Casimirs}, independent of the choice of the Hamiltonian function. 
These are functions $C:\mathfrak{g}^* \to \R$ that are constant on coadjoint orbits. These functions and the coadjoint orbits of the system are important for the long-term behavior of Lie--Poisson systems, and their preservation is an important goal when constructing numerical integration schemes. So-called \emph{Lie--Poisson integrators} preserve the coadjoint orbits, Casimir functions, and the symplectic structure on the coadjoint orbits \cite{arnaudon2018noise, brehier2023splitting}. 
The meaning of preserving the symplectic structure on the coadjoint orbits requires that we introduce the \emph{Poisson bracket} on $C^\infty(\mathfrak{g}^*)$. 

To introduce the Poisson bracket, let $\varrho \in \mathfrak{g}^*$ 
and let $f \in C^\infty(\mathfrak{g}^*)$. 
Further, let $\frac{\delta f}{\delta \varrho}\in\mathfrak{g}$ denote the variational derivative of $f$ with respect to $\varrho$.
In the finite-dimensional setting, we can interpret variational derivatives as partial derivatives.
Recall that the Lie algebra $\mathfrak{g}$ can be identified with its dual $\mathfrak{g}^*$ via the dual pairing. 
Let $D = \operatorname{dim}(\mathfrak{g}^*)$. A basis $(e_i)_{i=1}^D$ of $\mathfrak{g}$ induces a dual basis $(\varepsilon_i)_{i=1}^D$ on $\mathfrak{g}^*$, so that any $\varrho \in \mathfrak{g}^*$ can be written as
\begin{align*}
    \varrho = \sum_{i=1}^D \varrho_i \varepsilon_i.
\end{align*}
The Poisson bracket on $C^\infty(\mathfrak{g}^*)$ is now given by 
\begin{align*}
    \{f,g\}(\varrho) = \sum_{i,j,k = 1}^{D} C_{ij}^k \varrho_k \frac{\partial f}{\partial \varrho_i}\frac{\partial g}{\partial  \varrho_j},
\end{align*}
where we understand $\varrho \in \mathfrak{g}^*$ as a vector and  $C_{ij}^k$ are the structure constants of $\mathfrak{g}$. 

Let $I\colon \mathfrak{g}^* \to \R$ be a smooth map. Then $I$ evolves along a solution $X_t$ of the system~\eqref{eq:lie_poisson_deterministic} by the equation 
\begin{align}
\dot I(X_t) = \{I,H\}(X_t).
\end{align}
If it holds that $\{I,H \} = 0$, then $I$ is a conserved quantity, i.e., $\dot I(X_t) = 0$. 
An important class of conserved quantities are the functions $C:\mathfrak{g}^* \to \R$ that Poisson commute with every other function.
These are precisely the Casimirs. 
The Casimirs depend solely on the Poisson structure, and not on the choice of Hamiltonian.

Finally, the Poisson property of a mapping is defined using the Poisson bracket. A mapping $\phi: \mathfrak{g}^* \to \mathfrak{g}^*$ is a \emph{Poisson map} if for all $f,g \in C^\infty(\mathfrak{g}^*)$,
\begin{align*}
\{f\circ \phi,g\circ \phi\} = \{f,g\} \circ \phi,
\end{align*}
i.e., the \emph{pullback} of $\phi$ preserves the Poisson bracket, meaning that the symplectic structure on each coadjoint orbit is preserved. 

\section{Stochastic Lie--Poisson systems}
\label{sec:stoch_lp}

We now turn our attention to stochastic Lie--Poisson systems. Throughout this section, let  $W^1, \ldots, W^M$ be $M$ independent Brownian motions defined with respect to $(\Omega,\mathcal{F},(\mathcal{F}_t)_{t\geq 0},\mathbb{P})$, where $(\Omega,\mathcal{F},\mathbb{P})$ is a complete probability space and $(\mathcal{F}_t)_{t\geq 0}$ is the filtration. Typically, $M$ is chosen to be equal to the dimension of the underlying space so that the noise is isotropic \cite{arnaudon2018noise}.

To arrive at stochastic Lie--Poisson systems with solutions defined on the same coadjoint orbit \eqref{eq:coadjoint_orbit} and therefore the same conserved quantities as the deterministic system \eqref{eq:lie_poisson_deterministic}, the differential equation \eqref{eq:generator_deterministic} generating the group element is made stochastic. We denote the original Hamiltonian function by $H_0$ and introduce $M$ noise Hamiltonians $H_k\colon\mathfrak{g}^*\to \R$, $k=1,\ldots, M$.  The generating function then reads \begin{equation}
    \dd g_t = \nabla H_0(X_t)\, \dd t + \sum_{k=1}^M \nabla H_k(X_t)\circ\dd W_t^k,
    \label{eq:generator_stochastic}
\end{equation}
and the corresponding stochastic Lie--Poisson system in Stratonovich form is given by 
\begin{align}
\label{eq:sys_strat}
	 \dd X_t& =[\nabla H_0(X_t)^*,X_t] \, \dd t	+ \sum_{k=1}^M [\nabla H_k(X_t)^*,X_t] \circ \dd W_t^k,
\end{align} 
with initial condition~$X_0\in \mathfrak{g}^*$. 
This type of stochastic Lie--Poisson system has been the topic of previous studies \cite{brehier2023splitting, luesink2024casimir, arnaudon2018noise}.
The Stratonovich integral is used in the formulation \eqref{eq:generator_stochastic} and consequently in \eqref{eq:sys_strat}, since it allows for extending manifold-valued curves in differential geometry to manifold-valued processes using Stratonovich calculus \cite{emery2006two}, as a result of its convenient property that the ordinary chain rule holds for Stratonovich processes \cite{kloeden1992stochastic}.
 
The It\^{o} form can also be adopted but requires a choice of connection \cite{emery2006two, huang2023second} and is not further considered here.

To address local and global existence and uniqueness of solutions to the system \eqref{eq:sys_strat}, one could assume sufficiently regular coefficients to apply standard existence results in the literature, see e.g., \cite[Section 5.3]{protter2005stochastic}.  However, many of the assumptions necessary for global existence and uniqueness do not hold for the stochastic Lie--Poisson systems that we intend to study. 
For instance, we typically cannot assume global Lipschitz continuity.
Nevertheless, the global existence and uniqueness of solutions to \eqref{eq:sys_strat} was proven under mild conditions in \cite[Proposition 1]{brehier2023splitting}, relying on aspects of the underlying geometric structure. 
In particular, they utilize in their proof the conservation properties of the Casimirs, i.e., analytic real-valued functions constant on the coadjoint orbits \cite{arnaudon2018noise}.
Alternatively, Casimirs can be defined as the elements of the kernel of the Poisson bracket on $C^\infty(\mathfrak{g}^*)$. 
More specifically, if $I: \mathfrak{g^*} \to \R$ is a smooth map, then $I \circ X$ evolves (as a random quantity) along a solution $X$ of the system \eqref{eq:sys_strat} according to the equation  
\begin{align}
\dd I(X_t) = \{I,H_0\}(X_t)\, \dd t + \sum_{i=1}^M\{I,H_i \}(X_t) \circ \dd W_t^i. 
\end{align}
If $\{I,H_i \} = 0$ for all $i = 0,1,\dots, M$, then $I$ is a conserved quantity, i.e., $\text{d}I(X_t) = 0$. 
If  $C:\mathfrak{g^*} \to \R$ belongs to the kernel of the Poisson bracket, it is a conserved quantity and the kernel coincides with the Casimirs. 
In particular, this means that along the flow of the stochastic Poisson system, $\text{d}C(X_t) = 0$ everywhere. 

In \cite[Proposition 1]{brehier2023splitting}, it is shown that if the stochastic Lie--Poisson system has a Casimir function~$C$ that has compact level sets, i.e., the sets 
\begin{align*}
    \{X \in \mathfrak{g^*}: C(X) = c\}
\end{align*}
are compact for all $c \in \R$, then the stochastic Lie--Poisson system admits a unique global solution.
This is the case for system \eqref{eq:sys_strat} as it evolves on the dual of a $J$-quadratic Lie algebra of a compact Lie group, so \cite[Proposition 1]{brehier2023splitting} ensures global existence and uniqueness of solutions to system \eqref{eq:sys_strat}.

A central property of the solution of a stochastic Lie--Poisson system is that its flow preserves the coadjoint orbits, see, e.g., \cite[Theorem 2.8]{brehier2023splitting}. Preserving this distinctive feature numerically is also pursued in our work below by constructing a Casimir preserving Lie--Poisson integrator. 
Namely, we develop a numerical integrator $\Psi_h\colon\mathfrak{g}^*\times [0, T]\times \Omega \to \mathfrak{g}^* $ to the system \eqref{eq:sys_strat}, that for all step sizes $h>0$: 
\begin{enumerate}
    \item is almost surely a Poisson map.
    \item is almost surely a Lie--Poisson mapping. That is, the numerical flow almost surely preserves the coadjoint orbits and the symplectic structure on the coadjoint orbits, and almost surely preserves Casimir functions.  
\end{enumerate}

\begin{remark}
    \label{rem:casimir_is_ev_pres}
    The systems \eqref{eq:lie_poisson_deterministic} and \eqref{eq:sys_strat} describe isospectral flows. By definition, the eigenvalues, and the trace of $X_t$ are therefore conserved. Casimir preservation is implied by eigenvalue conservation \cite{modin2020lie}, namely, for any analytic function $f$ extended to matrices we may define a Casimir $C_f:\mathfrak{g}^* \to \R$ by 
    \begin{align*}
        C_f(X_t) = \operatorname{Tr}(f(X_t)).
    \end{align*}
\end{remark}

\section{Derivation of the isospectral midpoint method}
\label{sec:isomp}

Central to the derivation of the numerical method is stochastic Lie--Poisson reduction theory. 
The flow of the system~\eqref{eq:sys_strat} is the Poisson reduction of a canonical stochastic Hamiltonian system on~$T^*G$, or equivalently, the canonical stochastic Hamiltonian system is reconstructed from the stochastic Lie--Poisson system via the reconstruction equations \eqref{eq:reconstruction}, following the same approach as for the deterministic equations. 
By applying \Cref{eq:reconstruction} to \Cref{eq:sys_strat} we obtain the canonical stochastic Hamiltonian system on $T^*G$ given by
\begin{equation}
    \label{eq:hamsystem}
    \begin{split}
        \dd Q_t &= Q_t\nabla H_0(\mu(Q_t,P_t)) \, \dd t + \sum_{i=1}^M Q_t \nabla H_i (\mu(Q_t,P_t))\circ\dd W_t^i, \\
        \dd P_t &= -P_t\nabla H_0(\mu(Q_t,P_t))^* \, \dd t - \sum_{i=1}^M P_t \nabla H_i (\mu(Q_t,P_t))^* \circ\dd W_t^i.
    \end{split}
\end{equation}
\begin{remark}
\label{rem:canonical_hamiltonian_stochastic}
It is important to note that there are no a priori guarantees that the  flow of the stochastic Hamiltonian system \eqref{eq:hamsystem} remains on $T^*G$.
Indeed, by writing the equation in the form \eqref{eq:hamsystem}, we are implicitly embedding the system into the vector space $T^* R^{n \times n} = \R^{n\times n} \times \R^{n\times n} $, just as in the deterministic case, see \Cref{rem:canonical_hamiltonian_deterministic}. 
However, the reconstruction equations \cite[Proposition 9.18]{holm2009geometric}, also outlined in \Cref{app:reconstruction_stoch_hamiltonian}, ascertain that $(Q_t,P_t) \in T^*G$ for all times and all noise realizations. 
In particular, since $Q_t \in G$, it holds that $Q_t J Q_t = J$, meaning that $Q_t J Q_t$ is a quadratic invariant of the system. Furthermore, this means that we can appeal to results on vector spaces when proving results about the stochastic Hamiltonian system \eqref{eq:hamsystem}.

\end{remark}

As we shall see in Sections \ref{subsec:stochastic_midpoint} and \ref{subsec:reduction_midpoint}, a stochastic Lie--Poisson integrator can under certain conditions be obtained by applying the momentum map to an integrator of the stochastic Hamiltonian system \eqref{eq:sys_strat}. This yields a Lie--Poisson integrator if the integrator for the corresponding stochastic Hamiltonian system is a symplectic integrator and equivariant with respect to the group action generating the dynamics \cite{modin2020lie}. We elaborate on these technical aspects next.

\subsection{Existence and uniqueness of solutions to the canonical stochastic Hamiltonian system.}

The auxiliary canonical stochastic Hamiltonian system \eqref{eq:hamsystem} on $T^* G$ is used to derive the integration method for the system \eqref{eq:sys_strat} on $\mathfrak{g}^*$. Before proceeding to construct the integrator for the Lie--Poisson system \eqref{eq:sys_strat}, we must ensure the global existence and uniqueness of solutions to the auxiliary stochastic Hamiltonian system \eqref{eq:hamsystem}.

\begin{lemma}
\label{lem:upstairs_existence}
Let $H_0$ be of class $C^1$ and let $H_1,H_2, \ldots, H_M$ be of class $C^2$. 
Then, for any deterministic initial condition $(Q_0,P_0) \in T^*G$, \Cref{eq:hamsystem} has a unique global solution  $(Q_t,P_t)_{t \geq 0}$, with $(Q(0),P(0))= (Q_0,P_0)$. 
Further, there is a constant $R(Q_0,P_0)$ depending on the initial value $(Q_0,P_0)$ such that 
\begin{align*}
    \|(Q_t,P_t)\| \leq R(Q_0,P_0). 
\end{align*}
\end{lemma}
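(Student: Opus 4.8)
The plan is to regard \eqref{eq:hamsystem} as a stochastic differential equation on the ambient vector space $\R^{n\times n}\times\R^{n\times n}$, into which $T^*G$ is embedded (cf.\ \Cref{rem:canonical_hamiltonian_stochastic}), to obtain a unique local solution by standard theory, and then to promote it to a global one by a deterministic a priori bound that comes entirely from the geometry of the flow. First I would rewrite the Stratonovich system in It\^o form. Its diffusion coefficients are $\sigma_i(Q,P)=\bigl(Q\,\nabla H_i(\mu(Q,P)),\,-P\,\nabla H_i(\mu(Q,P))^*\bigr)$; since the momentum map $\mu$ of \eqref{eq:momentum_map} is a fixed quadratic polynomial and $H_i\in C^2$, each $\sigma_i$ is $C^1$, so the Stratonovich--It\^o correction $\tfrac12\sum_i D\sigma_i\,\sigma_i$ is well defined and, together with the drift built from $\nabla H_0$ ($H_0\in C^1$), yields coefficients that are locally Lipschitz on $\R^{n\times n}\times\R^{n\times n}$. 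Standard SDE theory (e.g.\ \cite[Section 5.3]{protter2005stochastic}) then gives a unique local strong solution up to an explosion time $\tau=\lim_{m\to\infty}\tau_m$, with $\tau_m=\inf\{t\ge 0:\|(Q_t,P_t)\|\ge m\}$.

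The main obstacle is global existence: because $\mu$ is quadratic, the coefficients satisfy neither a global Lipschitz nor a linear growth condition, so non-explosion cannot come from the usual moment estimates and must instead be extracted from the conservation laws of \eqref{eq:hamsystem}. I would first control $Q_t$. Applying the Stratonovich chain rule to $S_t:=Q_t^*JQ_t$ and using $\dd Q_t=Q_t\nabla H_0(\cdot)\,\dd t+\sum_i Q_t\nabla H_i(\cdot)\circ\dd W_t^i$ gives a linear matrix equation $\dd S_t=(\nabla H_0^*S_t+S_t\nabla H_0)\,\dd t+\sum_i(\nabla H_i^*S_t+S_t\nabla H_i)\circ\dd W_t^i$. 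Since every $\nabla H_k(\mu)$ lies in $\mathfrak g$, the $J$-quadratic identity \eqref{eq:j-quad} gives $\nabla H_k^*J+J\nabla H_k=0$, so the constant $S_t\equiv J$ solves this equation; by uniqueness for the linear SDE, $Q_t^*JQ_t=Q_0^*JQ_0=J$ for all $t<\tau$. By the characterization \eqref{eq:quadconst} this means $Q_t\in G$, and as $G$ is compact both $\sup_{Q\in G}\|Q\|$ and $\sup_{Q\in G}\|Q^{-1}\|$ are finite, bounding $\|Q_t\|$ by a deterministic constant.

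To bound $P_t$ I would pass to the reduced variable $X_t:=\mu(Q_t,P_t)$. A direct Stratonovich computation (the Lie--Poisson reduction discussed at the start of this section) shows that on $[0,\tau)$ the process $X_t$ solves the isospectral system \eqref{eq:sys_strat} with $X_0=\mu(Q_0,P_0)$; by the global well-posedness of \eqref{eq:sys_strat} recalled in \Cref{sec:stoch_lp} (following \cite[Proposition 1]{brehier2023splitting}) this $X_t$ must coincide with the unique global solution, which remains on the coadjoint orbit $\mathcal O_{X_0}$ of \eqref{eq:coadjoint_orbit}. As $\mathcal O_{X_0}$ is the image of the compact group $G$ under a continuous map it is compact, whence $\|X_t\|\le\sup_{Y\in\mathcal O_{X_0}}\|Y\|<\infty$. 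Combining this with the reconstruction relation $P_t=(Q_t^{-1})^*X_t$ of \eqref{eq:reconstruction} and the bound on $\|Q_t^{-1}\|$ yields a deterministic bound on $\|P_t\|$.

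Together the last two paragraphs give $\|(Q_t,P_t)\|\le R(Q_0,P_0)$ on $[0,\tau)$ with $R$ depending only on the deterministic initial datum; since the norm cannot reach $m$ for $m>R(Q_0,P_0)$, necessarily $\tau=\infty$, which simultaneously proves global existence and the asserted uniform bound. Uniqueness on $[0,\infty)$ follows by patching the local solutions, or, more structurally, from the observation that any two solutions share the same reduced process $X_t$ (by uniqueness for \eqref{eq:sys_strat}) and hence solve the same \emph{linear} reconstruction equation $\dd Q_t=Q_t\nabla H_0(X_t)\,\dd t+\sum_i Q_t\nabla H_i(X_t)\circ\dd W_t^i$, forcing $Q_t$ and then $P_t=(Q_t^{-1})^*X_t$ to agree.
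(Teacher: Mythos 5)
Your proof takes a genuinely different route from the paper's: you build the solution by local existence in the ambient space $\R^{n\times n}\times\R^{n\times n}$ and then rule out explosion via pathwise geometric invariants, whereas the paper truncates the Hamiltonians outside a compact set, solves the resulting globally Lipschitz truncated system, and uses Casimir conservation to show the truncation is never active, so the truncated solution is the true one. Within your route, the $Q$-bound (preservation of $Q_t^*JQ_t=J$ via uniqueness for a linear matrix SDE with adapted coefficients) is correct, and so is the identification of $X_t=\mu(Q_t,P_t)$ with the unique global solution of \eqref{eq:sys_strat}, which lives on a compact coadjoint orbit.

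The genuine gap is in the $P$-bound. You invoke $P_t=(Q_t^{-1})^*X_t$ as ``the reconstruction relation of \eqref{eq:reconstruction}'', but in your setup $(Q_t,P_t)$ is a solution of the ambient SDE \eqref{eq:hamsystem}, not a process defined by reconstruction, and this identity is not automatic: it is equivalent to $(Q_t,P_t)\in T^*G$, i.e.\ to $Q_t^*P_t\in\mathfrak{g}$, which is strictly more than $Q_t\in G$. Indeed, $\mu(Q,P)=\Pi(Q^*P)$ where $\Pi(A)=\tfrac12 A-\tfrac{1}{2c}JA^*J$ is a linear projection with nontrivial kernel $\mathfrak{m}=\{A: A^*J=JA\}$; consequently, boundedness of $Q_t$ and of $X_t=\mu(Q_t,P_t)$ says nothing about the $\mathfrak{m}$-component of $Q_t^*P_t$ and therefore does not bound $P_t$. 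As written, your argument yields at most non-explosion of $P_t$ (it solves a linear SDE whose coefficients $\nabla H_k(X_t)^*$ are bounded on the orbit), but not the deterministic bound $\|(Q_t,P_t)\|\le R(Q_0,P_0)$ that the lemma asserts and that the paper relies on later in \Cref{lem:numbound}. The gap is fillable by exactly the technique you already use for $Q_t^*JQ_t$: setting $Z_t:=(I-\Pi)(Q_t^*P_t)$ and using $[\mathfrak{g},\mathfrak{m}]\subseteq\mathfrak{m}$, one computes $\dd Z_t=[\nabla H_0(X_t)^*,Z_t]\,\dd t+\sum_{i=1}^M[\nabla H_i(X_t)^*,Z_t]\circ\dd W_t^i$, a linear SDE with $Z_0=0$, whence $Z_t\equiv 0$ and $P_t=(Q_t^{-1})^*X_t$ follows; alternatively, show that the process reconstructed from the global LP solution via \eqref{eq:reconstruction} solves \eqref{eq:hamsystem} and conclude by local uniqueness that it coincides with yours. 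A final, minor point: your claim that the It\^o-corrected coefficients are locally Lipschitz is not justified at the stated regularity ($H_0\in C^1$ gives only a continuous drift, and $H_i\in C^2$ gives only a continuous correction term $D\sigma_i\,\sigma_i$); the paper's own proof is equally loose at the corresponding step, so this is a shared issue rather than one specific to your argument.
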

\begin{proof}
    We follow the idea of the proof of \cite[Proposition 2.2]{brehier2023splitting}. 
    Note that in our setting, the proof cannot be directly applied, as the present paper considers a Hamiltonian system on the cotangent bundle of a Lie group and not a Lie--Poisson system on a vector space. 
    
    Let $C:\mathfrak{g}^* \to \R$ be a Casimir with compact level sets. Such a Casimir can always be found since the group is assumed to be compact.
    The stochastic Lie--Poisson system \eqref{eq:sys_strat} evolves on coadjoint orbits, which are by assumption compact. 
    The canonical stochastic Hamiltonian system \eqref{eq:hamsystem} reduces to the stochastic Lie--Poisson system \eqref{eq:sys_strat} via the momentum map $\mu$.
    Thus, the inverse image of a coadjoint orbit consists of all the $\mu$-fibers in $T^*G$ with the coadjoint orbit as base space. 
    The inverse image is an orbit in $T^*G$, and as $G$ is compact, the orbit is compact. Each $(Q_0,P_0)$ belongs to such a compact set, which we denote $O_{(Q_0,P_0)}$.
    Let 
    \begin{align*}
         R(Q_0,P_0) = \max_{\substack{(Q,P) \in O_{(Q_0,P_0)} | C(\mu(Q,P)) = C(\mu(Q_0,P_0)}} \|Q_0,P_0\|.
    \end{align*} 
    The maximum is finite, since it is taken over a compact set.
    Note that $ R(Q_0,P_0)$ is a deterministic quantity, depending solely on the deterministic initial conditions and the Casimir. 
    Let $R = R(Q_0,P_0) +1 $. 
    Define the truncated Hamiltonians $T_i:\mathfrak{g}^* \to \R$, $i = 1,\dots, M$, as the mappings that have compact support in the ball $B_R = \{X \in \mathfrak{g}^*\colon \|X\|\leq R\}$ and that coincide with $H_i$ in the ball $\{X \in \mathfrak{g}^*: \|X\|\leq R(Q_0,P_0) \}$. 
    Consider the corresponding stochastic Lie--Poisson system
    \begin{align}
    \label{eq:sys_strat_trunc}
	\begin{split}
	 &\dd X^R_t=[\nabla T_0(X^R_t)^*,X^R_t]\, \dd t	+ \sum_{i=1}^M [\nabla  T_i(X^R_t)^*,X^R_t] \circ \dd W_t^k.
	\end{split}
\end{align}

We note that due to the compact support of $T_0$, $[\nabla T_0(X^R_t)^*,X^R_t]$ is globally Lipschitz (since it is given by the product of Lipschitz functions and has compact support) and  $[\nabla  T_k(X_t)^*,X_t] $ is bounded with bounded derivatives for all $k = 1,\dots, M$. 
The canonical stochastic Hamiltonian system corresponding to \Cref{eq:sys_strat_trunc} is given by 
\begin{equation}
    \label{eq:hamsystem_trunc}
    \begin{split}
        \dd Q^R_t &= Q^R_t\nabla T_0(\mu(Q^R_t,P^R_t)) \, \dd t + \sum_{i=1}^M Q^R_t \nabla T_i (\mu(Q^R_t,P^R_t))\circ\dd W_t^i, \\
        \dd P^R_t &= -P^R_t\nabla T_0(\mu(Q^R_t,P^R_t))^* \, \dd t - \sum_{i=1}^M P^R_t \nabla T_i (\mu(Q^R_t,P^R_t))^* \circ\dd W_t^i,\\
    \end{split}
\end{equation}
with initial value $Q^R_0 =Q_0, P^R_0 =P_0$.
Note that since $T_0\colon \mathfrak{g}^* \to \R$ has compact support, so does $T_0 \circ \mu$. 
Indeed, the support of $T_0 \circ \mu$ consists of all $(Q,P) \in T^*G$ such that $\mu(Q,P) \in B_R$, i.e., the collection of $\mu$-fibers in $T^*G$ with $B_R$ as base space.
This is an orbit in $T^*G$, and as $G$ is compact, the orbit is compact. 
Therefore, the support of $T_0 \circ \mu$ is compact, and we conclude that the drift coefficients of \Cref{eq:hamsystem_trunc} are globally Lipschitz and the diffusion coefficients are bounded with bounded derivatives. 
Standard existence results now give a unique global solution $(Q^R_t,P^R_t)_{t \geq 0}$ to \Cref{eq:hamsystem_trunc} (see, e.g., \cite{Milstein2004}).

Furthermore, as \Cref{eq:hamsystem_trunc} reduces to \Cref{eq:sys_strat_trunc}, it holds that $C(\mu(Q^R_t,P^R_t)) = C(\mu(Q_0,P_0)) $ for all $t$. 
By definition, this means that $\|(Q^R_t,P^R_t)\| \leq R(Q_0,P_0)$ or in other words, that the solution $(Q^R_t,P^R_t)$ remains in the inverse image of the ball with radius $R(Q_0,P_0)$, so that $T_i(\mu(Q^R_t,P^R_t)) = H_i(\mu(Q^R_t,P^R_t)$. 
Thus, it holds that 
\begin{equation}
    \label{eq:hamsystem_trunc_2}
    \begin{split}
        \dd Q^R_t &= Q^R_t\nabla H_0(\mu(Q^R_t,P^R_t)) \, \dd t + \sum_{i=1}^M Q^R_t \nabla H_i (\mu(Q^R_t,P^R_t))\circ\dd W_t^i, \\
        \dd P^R_t &= -P^R_t\nabla H_0(\mu(Q^R_t,P^R_t))^* \, \dd t - \sum_{i=1}^M P^R_t \nabla H_i (\mu(Q^R_t,P^R_t))^* \circ\dd W_t^i,\\
    \end{split}
\end{equation}
with initial value $Q_0^R = Q_0,P_0^R = P_0$. 
Since \Cref{eq:hamsystem_trunc_2} coincides with \Cref{eq:hamsystem}, the theorem follows by noting that uniqueness follows from the local well-posedness of \Cref{eq:hamsystem}. 
\end{proof}

\subsection{The stochastic implicit midpoint method}\label{subsec:stochastic_midpoint}
We adopt the stochastic symplectic implicit midpoint method \cite{milstein2002numerical} for the Hamiltonian system \eqref{eq:hamsystem} and denote this integrator by $\Phi_h: T^*G \times [0, T] \times \Omega \to T^*G$, 
where $h$ is the time step size. The explicit dependence on $t\in [0, T]$ and $\omega\in\Omega$ will be omitted in what follows. We introduce the notation \begin{equation}
    \begin{split}
        f_i(Q, P) &= \frac{1}{2}Q\nabla H_i(\mu(Q, P)), \\
        k_i(Q, P) &= -\frac{1}{2}P\nabla H_i(\mu(Q, P))^*,
    \end{split}
\end{equation}
for $i=0, \ldots, M$. The integrator then reads as the composition $\Phi_h = \Phi_h^{(2)} \circ \Phi_h^{(1)}$ of two steps given by \begin{equation}
    \label{eq:stoch_midpoint_hamiltonian}
    \begin{split}
    \Phi_h^{(1)} &\colon \begin{cases}
        Q_n & = \tilde{Q}- \frac{1}{2}\left( f_0(\tilde{Q}, \tilde{P}) h + \sum_{i=1}^M f_i(\tilde{Q}, \tilde{P})(\zeta_{i})_n \sqrt{h}\right), \\
        P_n & = \tilde{P}-\frac{1}{2}\left(k_0(\tilde{Q}, \tilde{P}) h + \sum_{i=1}^M k_i(\tilde{Q}, \tilde{P})(\zeta_{i})_n\sqrt{h} \right),
    \end{cases} \\
    \Phi_h^{(2)} &\colon \begin{cases} Q_{n+1} & = \tilde{Q} + \frac{1}{2}\left( f_0(\tilde{Q}, \tilde{P}) h + \sum_{i=1}^M f_i(\tilde{Q}, \tilde{P})(\zeta_{i})_n \sqrt{h}\right), \\
    P_{n+1} & = \tilde{P}+\frac{1}{2}\left(k_0(\tilde{Q}, \tilde{P}) h + \sum_{i=1}^M k_i(\tilde{Q}, \tilde{P})(\zeta_{i})_n\sqrt{h} \right),
    \end{cases}
    \end{split}
\end{equation}
where the subscripts $n, n+1$ denote the time instances. 
We denote by $(\zeta_i)_n\sqrt{h}$ a truncated random variable to accommodate implicit integration \cite{milstein2002numerical}, which is defined as follows. We let $\xi\sqrt{h}=W_i(t_n+h)-W_i(t_n)$ be a Wiener increment, with $\xi\sim\mathcal{N}(0, 1)$. The truncated variable is then given by 
\begin{align*}
    \zeta_h = \begin{cases}
        & A_h \quad \text{if } ~ \xi>A_h, \\
        &-A_h \quad \text{if } ~ \xi<-A_h,\\
        & \xi \quad \text{if} ~ |\xi| \leq A_h,
    \end{cases}
\end{align*} where $A_h = \sqrt{2l |\log h| }$ and $l$ is an arbitrary fixed positive integer. In the remainder of the discussion of the stochastic implicit midpoint method, the subscript $h$ is omitted from the truncated random variable. Using the truncated variables additionally allows us to invert the implicit step of the midpoint method, which is required in the proof of Lemma \ref{lem:equivariance} as well as when proving convergence. 

\begin{remark}
It is important to note that for general Lie groups~$G$ there is no a priori guarantee that the numerical method \eqref{eq:stoch_midpoint_hamiltonian} remains on $T^*G$. 
However, the group $G$ is defined by the quadratic invariant \eqref{eq:quadconst}, and it was proven in \cite{Abdulle2012} that quadratic invariants are exactly preserved along the numerical flow of the implicit midpoint method, for all noise realizations. 
Therefore, $(Q_n, P_n)\in T^*G$ implies that $(Q_{n+1}, P_{n+1})=\Phi_h(Q_n, P_n)\in T^*G$.  
\end{remark}

The integrator is said to be \emph{equivariant} when $\Phi_h(g\cdot(Q, P)) = g\cdot\Phi_h(Q, P)$ for $g\in G$. 
We have the following lemma. 
\begin{lemma}
The stochastic implicit midpoint integrator \eqref{eq:stoch_midpoint_hamiltonian} is almost surely equivariant. 
    \label{lem:equivariance}
\end{lemma}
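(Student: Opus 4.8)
The plan is to reduce equivariance of $\Phi_h$ to a single structural fact: the momentum map $\mu$ of \eqref{eq:momentum_map} is invariant under the left action $g\cdot(Q,P) = (gQ,(g^{-1})^*P)$. First I would verify this by direct computation. Since $(gQ)^*(g^{-1})^*P = Q^*g^*(g^*)^{-1}P = Q^*P$ and $J\big((g^{-1})^*P\big)^*(gQ)J = JP^*g^{-1}gQJ = JP^*QJ$, both terms in \eqref{eq:momentum_map} are left unchanged, so $\mu(g\cdot(Q,P)) = \mu(Q,P)$ for every $g\in G$.

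From this invariance the vector fields $f_i$ and $k_i$ inherit simple transformation laws. Because $f_i(Q,P) = \tfrac12 Q\nabla H_i(\mu(Q,P))$ depends on $(Q,P)$ only through a leading factor $Q$ and the invariant argument $\mu(Q,P)$, I obtain $f_i(g\cdot(Q,P)) = g\,f_i(Q,P)$; likewise, from $k_i(Q,P) = -\tfrac12 P\nabla H_i(\mu(Q,P))^*$, I obtain $k_i(g\cdot(Q,P)) = (g^{-1})^* k_i(Q,P)$, for all $i=0,\dots,M$. In other words, $f_i$ transforms like the position variable $Q$ and $k_i$ like the momentum variable $P$ under the action.

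These laws are exactly what the midpoint scheme \eqref{eq:stoch_midpoint_hamiltonian} needs, and I would treat the two half-maps separately. For the explicit map $\Phi_h^{(2)}$, evaluating at $(g\tilde Q,(g^{-1})^*\tilde P)$ and using the transformation laws pulls a factor $g$ out of every $f_i$-term and a factor $(g^{-1})^*$ out of every $k_i$-term, giving at once $\Phi_h^{(2)}(g\cdot(\tilde Q,\tilde P)) = g\cdot\Phi_h^{(2)}(\tilde Q,\tilde P)$. For the implicit map $\Phi_h^{(1)}$, the point is that if $(\tilde Q,\tilde P)$ solves the implicit relation with data $(Q_n,P_n)$, then $(g\tilde Q,(g^{-1})^*\tilde P)$ solves the same relation with data $g\cdot(Q_n,P_n)$ — again by pulling $g$ and $(g^{-1})^*$ through the $f_i$- and $k_i$-terms. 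Equivariance of $\Phi_h^{(1)}$ then follows provided the implicit step admits a unique solution, and composing $\Phi_h = \Phi_h^{(2)}\circ\Phi_h^{(1)}$ yields the claim.

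The main obstacle is precisely this uniqueness of the implicit step, which is where the truncated increments $(\zeta_i)_n$ enter. Because $|(\zeta_i)_n|\le A_h = \sqrt{2l|\log h|}$ is bounded, the fixed-point map defining the midpoint is a contraction and hence invertible almost surely — the same invertibility already invoked to run the scheme. On the corresponding full-measure event the transformed implicit relation has the single solution $g\cdot(\tilde Q,\tilde P)$, so that $\Phi_h^{(1)}(g\cdot(Q_n,P_n)) = g\cdot\Phi_h^{(1)}(Q_n,P_n)$; combined with the explicit step this gives almost sure equivariance of $\Phi_h$.
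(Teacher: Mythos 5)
Your proposal is correct and takes essentially the same route as the paper: invariance of $\mu$ under the left action yields the transformation laws $f_i(g\cdot(Q,P)) = g\,f_i(Q,P)$ and $k_i(g\cdot(Q,P)) = (g^{-1})^*k_i(Q,P)$, the explicit half-step $\Phi_h^{(2)}$ is handled by direct substitution, and the implicit half-step $\Phi_h^{(1)}$ is handled through the equivariance of its explicit inverse --- your ``solutions map to solutions, plus uniqueness'' argument is exactly the paper's sandwich identity $\Phi_h^{(1)}(g\cdot(Q,P)) = \Phi_h^{(1)}\bigl((\Phi_h^{(1)})^{-1}(g\cdot\Phi_h^{(1)}(Q,P))\bigr) = g\cdot\Phi_h^{(1)}(Q,P)$ in different words. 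The paper likewise relies on the truncated increments $(\zeta_i)_n$ to guarantee that the implicit step is uniquely solvable, so your contraction remark fills in the same point the paper leaves to a reference.
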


\begin{proof}
Firstly, observe that equivariance of both $\Phi_h^{(1)}$ and $\Phi_h^{(2)}$ implies equivariance of the composition $\Phi_h^{(2)}\circ \Phi_h^{(1)} = \Phi_h$. 
Secondly, the inverse $(\Phi_h^{(1)})^{-1}$ of $\Phi_h^{(1)}$ is given by an explicit formula, for which equivariance is shown below.
Note that the equivariance of $(\Phi_h^{(1)})^{-1}$ indicates equivariance of $\Phi_h^{(1)}$ since it then holds true that 
\begin{equation}
\begin{split}
    \Phi_h^{(1)}(g\cdot(Q, P)) &= \Phi_h^{(1)}(g\cdot (\Phi_h^{(1)})^{-1}(\Phi_h^{(1)}(Q, P))) \\ &= \Phi_h^{(1)}((\Phi_h^{(1)})^{-1}(g\cdot \Phi_h^{(1)}(Q, P))) = g\cdot\Phi_h^{(1)}(Q, P).
\end{split}
\end{equation}
Furthermore, the momentum map $\mu(Q, P)$ is invariant under the group action. Thus, it is straightforward to show that \begin{equation}
\begin{split}
    f_i(g\cdot(Q, P)) &= gQ\nabla H_i(\mu(gQ, (g^*)^{-1}P))= g \left(Q\nabla H_i\mu(Q, P)\right) =  g f_i(Q, P),
\end{split}
\end{equation}
and $k_i(g\cdot(Q, P)) = (g^*)^{-1}k_i(Q, P)$ follows similarly. The equivariance of $(\Phi_h^{(1)})^{-1}$ then follows from \begin{equation}
\begin{split}
    (\Phi_h^{(1)})^{-1}(g\cdot ( Q,  P)) &= \begin{pmatrix}
         gQ- \frac{1}{2}\left( f_0(gQ, (g^*)^{-1}P) h + \sum_{i=1}^M f_i(gQ, (g^*)^{-1}P)(\zeta_{i})_n \sqrt{h}\right) \\
        (g^*)^{-1}P-\frac{1}{2}\left(k_0(gQ, (g^*)^{-1}P) h + \sum_{i=1}^M k_i(gQ, (g^*)^{-1}P)(\zeta_{i})_n\sqrt{h} \right)
    \end{pmatrix}^T \\
    & = \begin{pmatrix}
         gQ- \frac{1}{2}\left( g f_0(Q, P) h + \sum_{i=1}^M gf_i(Q, P)(\zeta_{i})_n \sqrt{h}\right) \\
        (g^*)^{-1}P-\frac{1}{2}\left((g^*)^{-1} k_0(Q, P) h + \sum_{i=1}^M (g^*)^{-1} k_i(Q, P)(\zeta_{i})_n\sqrt{h} \right)
    \end{pmatrix}^T \\
    & = g\cdot\begin{pmatrix}
        Q- \frac{1}{2}\left( f_0(Q, P) h + \sum_{i=1}^M f_i(Q, P)(\zeta_{i})_n \sqrt{h}\right) \\
        P-\frac{1}{2}\left(k_0(Q, P) h + \sum_{i=1}^M k_i(Q, P)(\zeta_{i})_n\sqrt{h} \right)
    \end{pmatrix}^T \\
    &= g\cdot (\Phi_h^{(1)})^{-1}( Q,  P).
\end{split}
\end{equation}
Equivariance of $\Phi_h^{(2)}$ is shown with a similar computation. The equivariance of $\Phi_h$ then follows, since it is a composition of equivariant maps. 
\end{proof}

\subsection{Reduction of the midpoint method to a stochastic Lie--Poisson method} \label{subsec:reduction_midpoint}
The stochastic Lie--Poisson integrator is obtained by discretizing the stochastic Hamiltonian scheme with the symplectic implicit midpoint method \eqref{eq:stoch_midpoint_hamiltonian} and subsequently applying the momentum map to the discretized system. For that purpose, we adopt the notation $X_n = \mu(Q_n, P_n)$, $\tilde{X} = \mu(\tilde{Q}, \tilde{P})$, and $X_{n+1} = \mu(Q_{n+1}, P_{n+1})$. Substituting the definitions of $Q_n, Q_{n+1}, P_{n}$, and $P_{n+1}$ from \eqref{eq:stoch_midpoint_hamiltonian} into the momentum map~$\mu$ in~\eqref{eq:momentum_map}, the reduced stochastic implicit midpoint scheme takes the form 
\begin{equation}
\label{eq:stochastic_midpoint_lp}
    \begin{split}
        \tilde{\Psi}_{h, n}(\tilde{X}) &= \nabla H_0(\tilde{X})^*h + \sum_{i=1}^M\nabla H_i(\tilde{X})^*(\zeta_i)_n\sqrt{h}, \\
        X_n &= \left(I- \frac{1}{2}\tilde{\Psi}_{h, n}(\tilde{X})\right) \tilde{X} \left(I+\frac{1}{2}\tilde{\Psi}_{h, n}(\tilde{X})\right), \\
        X_{n+1} &= \left(I+\frac{1}{2}\tilde{\Psi}_{h, n}(\tilde{X})\right) \tilde{X} \left(I- \frac{1}{2}\tilde{\Psi}_{h, n}(\tilde{X})\right).
    \end{split}
\end{equation}
We remark that $\tilde{\Psi}_{h, n}$ is a stochastic map. Therefore, we have that the integrator $\Psi_{h}\colon \mathfrak{g}^* \times [0, T] \times \Omega \to \mathfrak{g}^*$ and that $X_{n+1}$ is a random variable.
In what follows, we will refer to this integrator as the map $\Psi_h$ defined by $X_{n+1}=\Psi_h(X_n)$ as per \eqref{eq:stochastic_midpoint_lp}. The explicit dependence on $t\in [0, T]$ and $\omega\in\Omega$ is omitted for the sake of presentation.
The first main result of the paper is to prove that the integrator \eqref{eq:stochastic_midpoint_lp} indeed is a Poisson integrator and a Lie--Poisson integrator. 

\begin{theorem}
    \label{th:poissonint}
    The map $X_{n+1}=\Psi_h(X_n)$ is a Poisson integrator on $\mathfrak{g}^*$, i.e., it preserves the Poisson bracket almost surely.
\end{theorem}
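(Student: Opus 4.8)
The plan is to realize $\Psi_h$ as the Lie--Poisson reduction of the symplectic map $\Phi_h$ on $T^*G$ and to invoke the principle, underlying the discrete reduction framework of \cite{modin2020lie}, that the reduction of a $G$-equivariant symplectomorphism is a Poisson map. Every structure-preserving statement used below holds for each fixed noise realization, so the whole argument is carried out pathwise, for fixed $\omega \in \Omega$, and the almost sure conclusion follows by letting $\omega$ range over a set of full probability.

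I would first assemble three structural ingredients. Firstly, the stochastic implicit midpoint map $\Phi_h$ of \eqref{eq:stoch_midpoint_hamiltonian} is symplectic almost surely: for each realization of the truncated increments $(\zeta_i)_n$ it is an implicit midpoint step on the vector space $\R^{n\times n}\times\R^{n\times n}$, which preserves the canonical symplectic form \cite{milstein2002numerical}, while \Cref{rem:canonical_hamiltonian_stochastic} guarantees that the flow remains on $T^*G$. Secondly, $\Phi_h$ is almost surely $G$-equivariant by \Cref{lem:equivariance}. Thirdly, by the very construction of $\Psi_h$ in \eqref{eq:stochastic_midpoint_lp}, the momentum map intertwines the two integrators,
\begin{equation}
    \mu \circ \Phi_h = \Psi_h \circ \mu, \label{eq:intertwine}
\end{equation}
and equivariance together with the invariance of $\mu$ under the group action ensures that the right-hand side depends only on $\tilde X = \mu(\tilde Q, \tilde P)$, so that $\Psi_h$ is well defined on $\mathfrak{g}^*$.

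The core of the proof is the Lie--Poisson reduction identity: $\mu$ is a Poisson map from $T^*G$, with its canonical symplectic structure, to $\mathfrak{g}^*$ with the Lie--Poisson bracket $\{\cdot,\cdot\}$, a standard consequence of Lie--Poisson reduction \cite{holm2009geometric, marsden2013introduction}. Thus $\{f\circ\mu, g\circ\mu\}_{T^*G} = \{f,g\}\circ\mu$ for all $f,g \in C^\infty(\mathfrak{g}^*)$. Granting this, the verification is the algebraic chain, for fixed $\omega$,
\begin{align*}
    \{f\circ\Psi_h,\, g\circ\Psi_h\}\circ\mu
    &= \{(f\circ\Psi_h)\circ\mu,\, (g\circ\Psi_h)\circ\mu\}_{T^*G} \\
    &= \{(f\circ\mu)\circ\Phi_h,\, (g\circ\mu)\circ\Phi_h\}_{T^*G} \\
    &= \{f\circ\mu,\, g\circ\mu\}_{T^*G} = \{f,g\}\circ\mu,
\end{align*}
where the first and last equalities use the reduction identity, the second uses the intertwining relation \eqref{eq:intertwine}, and the third uses that $\Phi_h$ is symplectic and hence Poisson. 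Since $\mu$ is surjective onto $\mathfrak{g}^*$ — on the cotangent fiber over the identity it reduces to the identification $\mathfrak{g}^*\cong\mathfrak{g}$ — the identity descends to $\{f\circ\Psi_h, g\circ\Psi_h\} = \{f,g\}$ at every point of $\mathfrak{g}^*$, which is precisely the claimed Poisson property.

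The main obstacle is not the algebraic chain, which is routine once its inputs are granted, but rather justifying those inputs in the stochastic setting. One must verify that the symplecticity result of \cite{milstein2002numerical} genuinely applies pathwise to the embedded system on $\R^{n\times n}\times\R^{n\times n}$ (this is where the truncation of the Wiener increments and the quadratic invariant $Q_t^*JQ_t = J$ from \Cref{rem:canonical_hamiltonian_stochastic} enter), and that the reduction \eqref{eq:intertwine} is independent of the chosen $\mu$-fiber representative, which is exactly what equivariance of $\Phi_h$ supplies. With these two points settled, the deterministic reduction argument of \cite{modin2020lie} transfers to each realization, and the almost sure statement follows.
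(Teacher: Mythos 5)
Your proposal is correct and takes essentially the same route as the paper's own proof: there, too, the argument is that the symplectic, $G$-equivariant midpoint map $\Phi_h$ on $T^*G$ descends through the momentum map $\mu$ — which is a Poisson map — to $\Psi_h$ on $\mathfrak{g}^*$, with the conclusion holding pathwise for each fixed $\omega$. Your write-up simply makes explicit the intertwining relation $\mu\circ\Phi_h=\Psi_h\circ\mu$, the algebraic bracket computation, and the surjectivity of $\mu$, all of which the paper leaves implicit by citing the discrete reduction framework of \cite{modin2020lie,modin2023spatio} and \cite[Theorem 5.11]{hairer2006geometric}.
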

\begin{proof}
    The proof follows the arguments given by \cite{modin2020lie,modin2023spatio} and \cite[Theorem 5.11]{hairer2006geometric}. The symplectic and equivariant integrator $\Phi_h$ \eqref{eq:stoch_midpoint_hamiltonian} on $T^*G$ descends to the integrator $\Psi_h$ on $\mathfrak{g}^*$ after applying the momentum map. 
    The stochastic map $\Phi_h$ maps into $T^*G$ almost surely, after which applying the momentum map (as a mapping from $T^*G$ to $\mathfrak{g}^*$) is justified. The momentum map is a Poisson map \cite{marsden2013introduction} and hence the integrator $\Psi_h$ is a Poisson integrator. The momentum map is smooth and therefore its composition with $\Phi_h$ almost surely preserves the Poisson bracket. This result is valid for each fixed time and $\omega$.
\end{proof}

\begin{theorem}
\label{th:lpint}
    The map $X_{n+1}=\Psi_h(X_n)$ is a Lie--Poisson integrator on $\mathfrak{g}^*$, i.e., it almost surely preserves Casimirs and coadjoint orbits and is almost surely a symplectic map on the orbits.
\end{theorem}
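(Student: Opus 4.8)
The plan is to reduce all three claims to a single structural observation: the reduced update \eqref{eq:stochastic_midpoint_lp} is a \emph{similarity transformation} by a group element. Writing $W = \tilde\Psi_{h,n}(\tilde X)$ for brevity, the two lower lines of \eqref{eq:stochastic_midpoint_lp} read $X_n = (I - \tfrac12 W)\tilde X(I + \tfrac12 W)$ and $X_{n+1} = (I + \tfrac12 W)\tilde X(I - \tfrac12 W)$. Since $I \pm \tfrac12 W$ are polynomials in $W$ they commute, so solving the first relation for $\tilde X$ and substituting into the second yields
\begin{equation*}
    X_{n+1} = S\, X_n\, S^{-1}, \qquad S = \Bigl(I + \tfrac12 W\Bigr)\Bigl(I - \tfrac12 W\Bigr)^{-1},
\end{equation*}
the Cayley transform of $\tfrac12 W$. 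First I would check that $S$ is well defined: the gradients $\nabla H_i(\tilde X)$ lie in $\mathfrak{g}$, and since $\mathfrak{g}$ is closed under conjugate transpose (it is reductive), so do $\nabla H_i(\tilde X)^*$; hence $W\in\mathfrak{g}$. As $G$ is compact, elements of $\mathfrak{g}$ have purely imaginary spectrum, so $I - \tfrac12 W$ is invertible for every noise realization and $S$ is defined almost surely.

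The key step is to show $S\in G$. Here I would use the $J$-quadratic identity $W^* J = -JW$ together with $J^2 = cI$ to rewrite $I \mp \tfrac12 W^* = J(I \pm \tfrac12 W)J^{-1}$, from which a short computation gives $S^* = J(I + \tfrac12 W)^{-1}(I - \tfrac12 W)J^{-1}$ and hence, using that $I \pm \tfrac12 W$ commute, $S^* J S = J$. By \eqref{eq:quadconst} this is exactly the statement $S\in G$. I expect this verification --- that the Cayley transform maps the Lie algebra into the quadratic group --- to be the main obstacle, since it is the only place the $J$-quadratic hypothesis is genuinely used, and it must be carried out so that the commutativity of $I \pm \tfrac12 W$ and the relation $J^{-1} = c^{-1}J$ are invoked correctly.

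With $S\in G$ in hand the three conservation properties follow quickly. For coadjoint orbits, I would first observe that $G$ is itself closed under conjugate transpose: from $Q^* J Q = J$ one gets $Q^{-1} = J^{-1}Q^* J$, and then $QJQ^* = QJ^2Q^{-1}J^{-1} = J$, so $Q^*\in G$. Setting $g = S^*$ therefore gives $g\in G$ with $X_{n+1} = g^* X_n (g^*)^{-1}$, which by \eqref{eq:coadjoint_orbit} lies on $\mathcal{O}_{X_n}$; thus the orbit is preserved almost surely. Casimir preservation then follows either directly --- $X_{n+1} = S X_n S^{-1}$ is a similarity transformation, hence isospectral, so the spectral Casimirs $C_f(X) = \operatorname{Tr}(f(X))$ of \Cref{rem:casimir_is_ev_pres} are unchanged --- or simply because Casimirs are constant on coadjoint orbits, which have just been shown to be preserved. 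Finally, the symplectic-on-orbits property is immediate from \Cref{th:poissonint}: a Poisson map that sends each coadjoint orbit to itself restricts to a diffeomorphism of that orbit preserving the symplectic form induced by the Poisson bracket, i.e.\ a symplectomorphism on the orbit. All three statements hold for each fixed $n$ and $\omega$, giving the almost sure conclusions.
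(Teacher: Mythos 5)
Your proposal is correct and follows essentially the same route as the paper's proof: both rewrite the reduced update as the similarity transformation $X_{n+1} = \mathrm{cay}\bigl(\tfrac12 W\bigr)\,X_n\,\mathrm{cay}\bigl(\tfrac12 W\bigr)^{-1}$ and identify the Cayley factor with a group element to obtain orbit (hence Casimir) preservation, then invoke \Cref{th:poissonint} for symplecticity on the orbits. The only difference is one of self-containedness: where the paper cites \cite{hairer2006geometric} for the fact that the Cayley transform maps a $J$-quadratic algebra into $G$, you verify $S^*JS=J$ directly (correctly), and you additionally spell out the invertibility of $I-\tfrac12 W$ and the closure of $G$ under conjugate transpose, which the paper leaves implicit.
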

\begin{proof}
   The result follows along the lines presented by \cite{modin2020lie,modin2023spatio}. 
   A schematic representation of the construction of the Lie--Poisson integrator is provided in \Cref{fig:illustration}, due to \cite{modin2023spatio}, and is elaborated below. 
    As a first step, recall that $\mathfrak{g}^* \cong T^*G /G$. 
    This means that points in $\mathfrak{g}^*$ can be identified with $G$-orbits in the cotangent bundle. 
    Thus, if two points in $T^*G $ belong to the same orbit, they map to the same point in $\mathfrak{g}^*$. 
    Recall that
    \begin{align*}
        \Psi_h(X_n)= \mu(\Phi_h(Q_n,P_n)). 
    \end{align*}
    Thus, if $\Psi_h$ is to be well-defined, $\Phi_h$ must map orbits to orbits, i.e.,  for any $g \in G$, 
    $\Phi_h(Q, P)$ must be in the same orbit as $\Phi_h(g\cdot(Q, P))$. This holds if 
    $\Phi_h(g\cdot(Q, P)) = g\cdot\Phi_h(Q, P)$, which is true since $\Phi_h$ is equivariant as per \Cref{lem:equivariance}.

    Now, let us verify that the mapping preserves the coadjoint orbit $\cO_{X} = \{g^* X (g^*)^{-1}|g \in G\}$ almost surely. 
    This amounts to showing that there is a $g\in G$ such that $\Psi_h(X_n) = g^*X_n (g^*)^{-1}$ almost surely. 
    The map $\Psi_h$ contains a noise increment and $\Psi_h(X_n)$ is a random variable. Therefore, the group element $g$ will depend on the noise increment and on $X_n$ and will also be a random variable.
    The steps presented here follow closely from the previous work \cite{viviani2020minimal}. 
    First note that, from the definition of the implicit midpoint scheme \eqref{eq:stochastic_midpoint_lp}, we may write \begin{equation}
        \tilde{X} = \left(I- \frac{1}{2}\tilde{\Psi}_{h, n}(\tilde{X})\right)^{-1} X_n \left(I+\frac{1}{2}\tilde{\Psi}_{h, n}(\tilde{X})\right)^{-1}
    \end{equation}
    and therefore \begin{equation}
        X_{n+1} = \left(I- \frac{1}{2}\tilde{\Psi}_{h, n}(\tilde{X})\right)^{-1}\left(I+\frac{1}{2}\tilde{\Psi}_{h, n}(\tilde{X})\right) X_n  \left(I- \frac{1}{2}\tilde{\Psi}_{h, n}(\tilde{X})\right) \left(I+\frac{1}{2}\tilde{\Psi}_{h, n}(\tilde{X})\right)^{-1},
    \end{equation}
    where we have used that $(I+A)^{-1}(I-A) = (I-A)(I+A)^{-1}$ for general $A$.
    We recall the Cayley transform, which maps elements of $\mathfrak{g}$ into $G$ and is a local diffeomorphism near $A=0$.
    For any $A$ in a $J$-quadratic Lie algebra $\mathfrak{g}$ \cite{hairer2006geometric}, the resulting group element and its inverse are defined as \begin{equation}
    \begin{split}
        \mathrm{cay}(A) &= (I - A)^{-1} (I+A), \\
        \mathrm{cay}^{-1}(A) &= (I+A)^{-1} (I-A).
    \end{split}
    \end{equation}
    We thus observe that \begin{equation}
        X_{n+1} = \mathrm{cay}^{-1}\left(-\frac{1}{2} \tilde{\Psi}_{h, n}(\tilde{X}) \right) ~ X_n ~ \mathrm{cay}\left(-\frac{1}{2} \tilde{\Psi}_{h, n}(\tilde{X}) \right).
    \end{equation}
    The operator $\tilde{\Psi}_{h, n}$ maps from $\mathfrak{g}^*\times [0, T] \times \Omega $ to $\mathfrak{g}^*$ since it is a linear combination of the adjoints of gradients of the Hamiltonians, each of which are elements of $\mathfrak{g}^*$. We may identify $\tilde{\Psi}_{h, n}(\tilde{X})\in \mathfrak{g}^*$ with an element of the Lie algebra $\mathfrak{g}$ through the dual pairing. The Cayley transform maps into the compact group $G$ and is therefore surjective \cite{hall2013lie}.
    We can thus define the $G$-valued random variable~$g$ such that $g^*=\mathrm{cay}^{-1}\left(-\frac{1}{2} \tilde{\Psi}_{h, n}(\tilde{X}) \right)$, which proves that the mapping preserves coadjoint orbits almost surely. 

    Finally, we note that the group action preserves the symplectic structure. As shown in the previous part of the proof, the outcome of the mapping $\Psi_h(X_n)$ can be identified with a group element. This group element can subsequently be written as a combination of left and right group actions acting on $X_n$. In addition, the integrator is a Poisson map by \Cref{th:poissonint}. As an immediate consequence of these results, the integrator almost surely preserves the symplectic structure on the coadjoint orbits.
\end{proof}

\begin{figure}[h!]
\centering
\begin{tikzpicture}[scale=0.9]
\draw[thick] (0,0) rectangle (10,6);
\draw[thick] (3,0.3) -- (3,5.7);
\draw[thick] (6,0.3) -- (6,5.7);
\draw[thick] (2,-4) -- (7.5,-4);
\filldraw[black] (3,2) circle (0.1);
\filldraw[black] (3,4) circle (0.1);
\filldraw[black] (6,2) circle (0.1);
\filldraw[black] (6,4) circle (0.1);
\filldraw[black] (3,-4) circle (0.1);
\filldraw[black] (6,-4) circle (0.1);
\draw [->, thick,dashed,-{Stealth[bend,length=10pt]}] (3,-0.5) -- (3,-3.5);
\draw [->, thick,dashed,-{Stealth[bend,length=10pt]}] (6,-0.5) -- (6,-3.5);
\draw [->, thick,-{Stealth[bend,length=10pt]}] (3,-4.5) .. controls  +(down:10mm) and +(down:10mm) .. (6,-4.5);
\draw[thick] (0,-6) -- (7,-6);
\draw[thick] (0,-6) -- (2,-2.5);
\draw[thick] (2,-2.5) -- (9,-2.5);
\draw[thick] (7,-6) -- (9,-2.5);
\node[scale=1.5] at (4.5,-5.5) {$\Psi_h$};
\node[scale=1.5] at (2.5,2) {$a$};
\node[scale=1.5] at (7,1.5) {$\Phi_{h}(a)$};
\node[scale=1.5] at (2.3,4) {$g \cdot a$};
\node[scale=1.5] at (7.5,4.7) {$(\Phi_{h})(g\cdot a)$};
\draw [->, thick,-{Stealth[bend,length=10pt]}] (6.2,2) .. controls  +(right:5mm) and +(right:5mm) .. (6.2,4);
\node[scale=1.5] at (7,3) {$g$};
\draw [->, thick,-{Stealth[bend,length=10pt]}] (3.2,2) .. controls  +(up:5mm) and +(up:5mm) .. (5.8,2);
\draw [->, thick,-{Stealth[bend,length=10pt]}] (3.2,4) .. controls  +(up:5mm) and +(up:5mm) .. (5.8,4);
\node[scale=1.5] at (4.5,4.8) {$\Phi_{h}$};
\node[scale=1.5] at (2,-4.5) {$\mathcal{O}$};
\node[scale=1.5] at (4.5,1.8) {$\Phi_{h}$};
\node[scale=1.5] at (2.5,-1.5) {$\mu$};
\node[scale=1.5] at (5.5,-1.5) {$\mu$};
\node[scale=1.5] at (11.5,-3.8) {$\mathfrak{g}^{*}=T^{*}G/G$};
\node[scale=2.5] at (11.5,4) {$T^{*}G$};
\draw [->, thick,-{Stealth[bend,length=10pt]}] (11.5,3.5) -- (11.5,-2.5);
\node[scale=2] at (12,0.5) {$\mu$};
\end{tikzpicture}
\caption{Schematic representation of the construction of the Lie--Poisson integrator, as in \cite{modin2023spatio}. We consider an equivariant symplectic method $\Phi_h\colon T^*G\to T^*G$, i.e., satisfying $\Phi_h(g\cdot a) = g\cdot\Phi_h(a)$ for $g\in G$ and $a\in T^*G$.
This method descends to a Lie--Poisson method $\Psi_h\colon\mathfrak{g}^*\to\mathfrak{g}^*$ on the coadjoint orbit $\mathcal{O}$ after applying the momentum map $\mu$.}
\label{fig:illustration}
\end{figure}
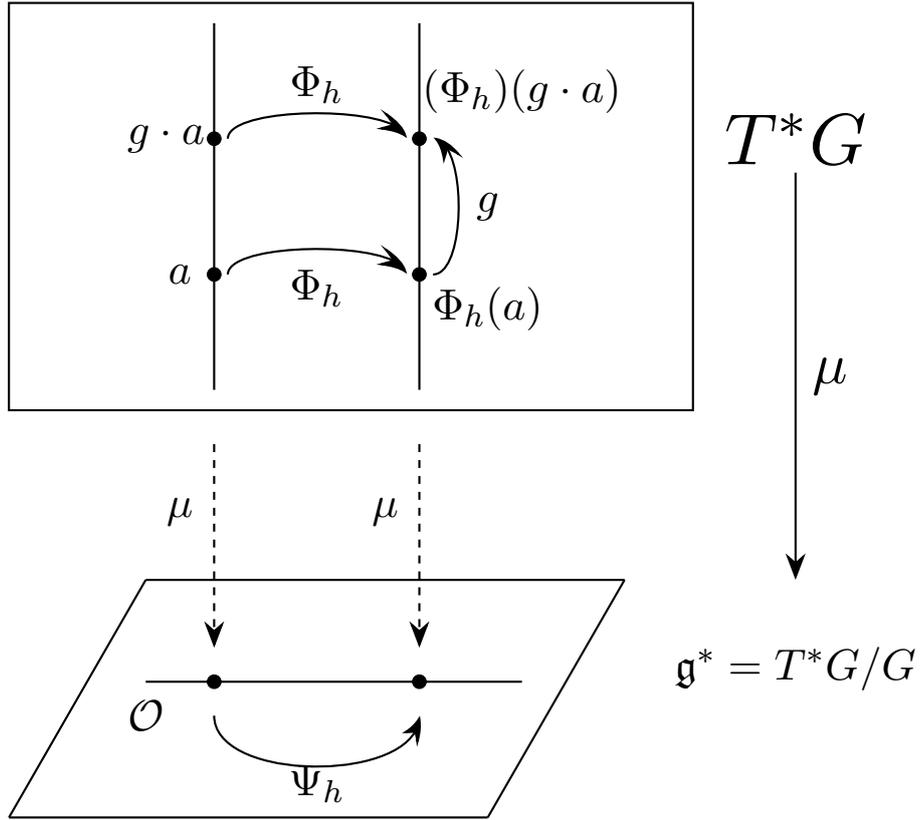

\section{Error analysis}
\label{sec:error_analysis}

In this section, we bound the strong and weak error of the scheme \eqref{eq:stochastic_midpoint_lp}. 
As it is derived from a well-studied integrator for Hamiltonian systems, the midpoint method, we can bound the convergence of the scheme \eqref{eq:stochastic_midpoint_lp} by the convergence properties of the midpoint method applied to the canonical Hamiltonian system. 
To this end, the first step is to verify the strong and weak convergence for the midpoint method \eqref{eq:stoch_midpoint_hamiltonian}. 
Strong and weak convergence, under several strong assumptions, have been established for stochastic Hamiltonian systems in, e.g.,~\cite{Milstein2004}. 
Here, we rely on the geometric structure of the stochastic Hamiltonian system \eqref{eq:hamsystem} to prove convergence and thereby avoid overly restrictive assumptions on the coefficients.
Therefore, we use the same type of truncation argument as in the proof of \Cref{lem:upstairs_existence} to prove convergence of the implicit midpoint method applied to the system \eqref{eq:hamsystem}. 

As a first step to proving convergence, we have the following technical lemma, adapted from the Lie--Poisson case of \cite[Proposition 3.3]{brehier2023splitting}. 
\begin{lemma}
    \label{lem:numbound}
    Consider the stochastic Hamiltonian integrator $(Q_{n+1},P_{n+1}) = \Phi_h(Q_n,P_n)$ defined by \eqref{eq:stoch_midpoint_hamiltonian}.
    Then, it holds that for any initial value $(Q_0,P_0) \in T^*G$ that there exists an $R(Q_0,P_0)<\infty$ such that almost surely,
    \begin{align}
        \label{eq:numer_bound}
        \sup_{h \geq 0} \sup_{n \geq 0} \|Q_n,P_n\| \leq R(Q_0,P_0). 
    \end{align}
 \end{lemma}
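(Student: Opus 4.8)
The plan is to mirror the proof of \Cref{lem:upstairs_existence}, replacing the continuous conservation laws used there with the exact conservation laws of the discrete scheme. The essential point is that the iterates $(Q_n,P_n)$ are confined, for every admissible step size $h$ and every index $n$, to a single compact set $O_{(Q_0,P_0)} \subseteq T^*G$ that depends only on the initial data and on the geometry of the problem; the supremum of $\|(Q,P)\|$ over this set then furnishes the desired bound $R(Q_0,P_0)$, automatically uniform in $h$ and $n$.

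First I would establish by induction on $n$ that $(Q_n,P_n) \in T^*G$ for all $n$, almost surely. The base case is the hypothesis $(Q_0,P_0) \in T^*G$. For the inductive step, recall that $T^*G$ is cut out by the quadratic constraint $Q^*JQ = J$ of \eqref{eq:quadconst}, and that the implicit midpoint rule preserves quadratic invariants exactly for every noise realization, as recorded in the remark following \eqref{eq:stoch_midpoint_hamiltonian} (via \cite{Abdulle2012}). Hence $(Q_n,P_n)\in T^*G$ implies $(Q_{n+1},P_{n+1}) = \Phi_h(Q_n,P_n) \in T^*G$. Here the truncated increments $(\zeta_i)_n$ play the role of guaranteeing that the implicit step is solvable, so that the iterates are genuinely well-defined without disturbing the exact preservation of the invariant.

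Next I would track the reduced iterates $X_n = \mu(Q_n,P_n)$. By the reduction construction underlying \Cref{th:poissonint,th:lpint} (the commuting diagram of \Cref{fig:illustration}, which is valid thanks to the equivariance of \Cref{lem:equivariance}), one has $X_{n+1} = \mu(\Phi_h(Q_n,P_n)) = \Psi_h(X_n)$, so that $X_n = \Psi_h^n(X_0)$. Since $\Psi_h$ almost surely preserves coadjoint orbits by \Cref{th:lpint}, induction gives $X_n \in \mathcal{O}_{X_0}$ for all $n$, almost surely, and $\mathcal{O}_{X_0}$ is compact because $G$ is compact.

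Combining the two steps, $(Q_n,P_n)$ lies in $O_{(Q_0,P_0)} := \{(Q,P)\in T^*G : \mu(Q,P)\in\mathcal{O}_{X_0}\}$, which is exactly the compact orbit in $T^*G$ used in the proof of \Cref{lem:upstairs_existence}: it is the union of the $\mu$-fibres over the compact base $\mathcal{O}_{X_0}$, and is compact as $G$ is compact. Setting $R(Q_0,P_0) = \sup_{(Q,P)\in O_{(Q_0,P_0)}} \|(Q,P)\|$, which is finite by compactness and independent of both $h$ and $n$, yields the claimed almost sure bound. The main point requiring care, and the only real content beyond bookkeeping, is that both invoked conservation properties are \emph{exact} and hold for \emph{all} $h$: the quadratic-invariant preservation of the midpoint rule and the orbit preservation of the reduced map do not degrade with the step size, which is precisely why the confining set $O_{(Q_0,P_0)}$, and hence $R(Q_0,P_0)$, is independent of $h$ and $n$.
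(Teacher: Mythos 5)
Your proposal is correct and takes essentially the same route as the paper's proof: both confine the iterates by invoking the structure preservation of the reduced integrator (\Cref{th:lpint}) together with the compact set $O_{(Q_0,P_0)}$ already constructed in the proof of \Cref{lem:upstairs_existence}, whose diameter bounds $\|(Q_n,P_n)\|$ uniformly in $h$ and $n$. The only differences are cosmetic: you use the coadjoint-orbit preservation part of \Cref{th:lpint} where the paper uses Casimir preservation (equivalent here, since Casimirs are constant on orbits), and you spell out the induction showing $(Q_n,P_n)\in T^*G$ via the quadratic-invariant preservation of the midpoint rule, which the paper leaves implicit in its earlier remark.
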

\begin{proof}
    By \Cref{th:lpint}, the scheme \eqref{eq:stoch_midpoint_hamiltonian} descends to a Casimir preserving integrator, 
    and for each step $\mu \circ \Phi_h(Q_n,P_n) = \mu((Q_{n+1},P_{n+1}))$ corresponds to a step with the Casimir preserving integrator on the dual of the algebra. 
    Thus, for all $n \geq 1$, for an arbitrary Casimir $C$  with compact level sets
    \begin{align*}
        C(\mu(Q_{n},P_{n})) = C(\mu(Q_{n-1},P_{n-1})) = \dots = C(\mu(Q_0,P_0)).
    \end{align*}
    Now, by Lemma \ref{lem:upstairs_existence}, since $R(Q_0,P_0)$ is the maximum of $\|(Q,P)\|$ over the level set of $(Q_0,P_0)$ of the Casimir, we have by the compactness of the level set that 
    \begin{equation*}
        \|Q_n,P_n\| \leq R(Q_0,P_0). 
        \qedhere
    \end{equation*}
\end{proof}

We now establish convergence of the implicit midpoint method for system \eqref{eq:hamsystem}.
We remark that the Casimir preservation proven in \Cref{th:lpint} is central to proving convergence. In fact, there is no guarantee of convergence without Casimir preservation. As a first step, we prove the convergence of the truncated stochastic Hamiltonian system \eqref{eq:hamsystem_trunc}. 

\begin{lemma}
    \label{lem:upstairs_convergence}
    Let $H_0$ be of class~$C^4$ and let $H_1,H_2, \ldots, H_M$ be of class~$C^5$.
    Consider the stochastic Hamiltonian integrator $(Q_{n+1}^R,P_{n+1}^R) = \Phi_h(Q_n^R,P_n^R)$ applied to the truncated stochastic Hamiltonian system \eqref{eq:hamsystem_trunc}. 
    Let $T >0$ be an arbitrary bounded final time and let $N$ be the number of steps needed to reach the final time with constant step size $h=T/N$. 
    Then, there is a constant~$\kappa>0$ such that 
    \begin{align*}
        \sup_{0 \leq n \leq N} \E[\left\|\left(Q^R(hn),P^R(hn)\right)-(Q_n^R,P_n^R)\right\|^2]^{1/2} \leq \kappa h^{1/2}. 
    \end{align*}
    Further, if $C^4 \ni \phi: T^*G \to \R$ has bounded derivatives, then there is a constant $\kappa>0$ such that 
    \begin{align}
    \sup_{0 \leq n \leq N} \left|\E[\phi(Q^R(hn),P^R(hn))-\phi(Q_n^R,P_n^R)]\right| \leq \kappa h. 
    \end{align}
\end{lemma}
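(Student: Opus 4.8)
The plan is to reduce the claim to the classical convergence theory for stochastic symplectic methods of Milstein and Tretyakov \cite{milstein2002numerical, Milstein2004}, taking advantage of the fact that the \emph{truncated} system \eqref{eq:hamsystem_trunc} has, by construction, coefficients that are globally well behaved on the ambient vector space. First I would view \eqref{eq:hamsystem_trunc} as a Stratonovich (equivalently It\^o) system on $\R^{n\times n}\times\R^{n\times n}$, as justified in \Cref{rem:canonical_hamiltonian_stochastic}, and record the regularity of its coefficients: the drift $(Q,P)\mapsto\bigl(Q\nabla T_0(\mu(Q,P)),\,-P\nabla T_0(\mu(Q,P))^*\bigr)$ has compact support and hence bounded derivatives up to the order needed below, while each diffusion coefficient $(Q,P)\mapsto\bigl(Q\nabla T_i(\mu(Q,P)),\,-P\nabla T_i(\mu(Q,P))^*\bigr)$ is bounded with bounded derivatives. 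Together with the smoothness of the momentum map $\mu$, the hypotheses $H_0\in C^4$ and $H_1,\dots,H_M\in C^5$ make the truncated drift of class $C^3$ and the truncated diffusion of class $C^4$ with bounded derivatives; passing to the It\^o form introduces the Stratonovich correction $\tfrac12\sum_i(\nabla\sigma_i)\sigma_i$, which is precisely why one extra derivative on the noise Hamiltonians is required.

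The technical core is the one-step (local) error estimate for the implicit midpoint map $\Phi_h$ of \eqref{eq:stoch_midpoint_hamiltonian}. Because the increments used are the truncated variables $(\zeta_i)_n\sqrt h$, bounded by $A_h\sqrt h=\sqrt{2l\,|\log h|\,h}\to 0$, the defining equations of $\Phi_h^{(1)}$ and $\Phi_h^{(2)}$ are uniquely solvable by a contraction argument for all sufficiently small $h$, and the resulting one-step solution admits an expansion in powers of $h$ and the increments. Comparing this expansion with the Stratonovich--Taylor expansion of the exact one-step solution, and writing $\Delta$ for the exact increment of $(Q^R,P^R)$ over $[0,h]$ and $\bar\Delta$ for the numerical increment, I would establish
\begin{align*}
    \bigl|\E[\Delta-\bar\Delta]\bigr|\le K\,h^{2},\qquad \E\bigl[\|\Delta-\bar\Delta\|^2\bigr]^{1/2}\le K\,h,
\end{align*}
uniformly over the (bounded) state. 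The use of truncated increments is harmless at this order: the event $\{|\xi|>A_h\}$ has probability $O(h^{l})$, so for the fixed integer $l$ chosen large enough the difference between $(\zeta_i)_n\sqrt h$ and the genuine Wiener increment contributes only at order $h^{l}$, far below the orders above, precisely as in \cite{milstein2002numerical}.

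With the local orders $p_1=2$ (mean) and $p_2=1$ (mean square) in hand, I would invoke the fundamental mean-square convergence theorem \cite[Ch.~1]{Milstein2004}: since $p_2\ge\tfrac12$ and $p_1\ge p_2+\tfrac12$, it yields the global mean-square order $p_2-\tfrac12=\tfrac12$, which is the first assertion. For the weak estimate I would use the companion weak-convergence theorem together with the backward Kolmogorov representation of $\E[\phi(Q^R(T),P^R(T))]$: because the scheme matches the one-step moments of the diffusion up to $O(h^2)$ and $\phi\in C^4$ has bounded derivatives, each telescoped local weak error is $O(h^2)$, producing global weak order $1$. The uniform-in-$n$ moment bounds required by both theorems are immediate from the boundedness of the truncated coefficients, and are in any case available from \Cref{lem:numbound}.

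I expect the main obstacle to be the local error analysis of the \emph{implicit} scheme under genuinely multiplicative, non-commutative noise. One must expand the implicitly defined midpoint map in powers of $h$ and of the truncated increments, verify that the symmetric midpoint evaluation reproduces both the leading noise term and the Stratonovich drift correction, and isolate the leading unmatched contribution --- the iterated stochastic integrals, which are $O(h)$ in mean square and are responsible for the half-order strong rate. Establishing $p_2=1$ in mean square and $p_1=2$ in mean is the crux; once these are secured, the passage from local to global error is a direct application of the Milstein--Tretyakov framework.
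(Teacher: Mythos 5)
Your proposal is correct and follows essentially the same route as the paper: both exploit that the truncated coefficients of \eqref{eq:hamsystem_trunc} are globally Lipschitz and bounded with bounded derivatives thanks to their compact support, and then fall back on the Milstein--Tretyakov convergence theory for the stochastic implicit midpoint scheme with truncated increments \cite{milstein2002numerical, Milstein2004}. The only difference is one of packaging --- the paper directly cites the known strong and weak convergence results for the midpoint method (together with \cite{hong2023symplectic}) after verifying their hypotheses, whereas you propose to re-derive them via one-step error expansions and the fundamental mean-square and weak convergence theorems, which is precisely how the cited results are established.
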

\begin{proof}
    Consider a general Stratonovich SDE, 
    \begin{align*}
        dY_t = \mu(Y_t)\mathrm d t + \sum_{i=1}^m \sigma_i(Y_t)\circ \mathrm d W_t^i.
    \end{align*}
    The midpoint method is known to convergence strongly and weakly for systems of this form under strong assumptions on  $\mu,\sigma_1, \dots, \sigma_m$. 
    All coefficients, as well as the Stratonovich corrections 
    and $\partial_x \sigma_i \sigma_i$ must be Lipschitz. 
    The drift coefficient must be in $C^4$ with bounded derivatives, and 
    the diffusion coefficients $\sigma_1,\ldots, \sigma_m$ must be in $C^5$ with bounded first and second derivatives. 
    To prove convergence in our case, we first verify that the truncated canonical system \eqref{eq:hamsystem_trunc} satisfies these conditions \cite[Section 2.2]{milstein2002numerical}, \cite[Section 2.2.1,Theorem 6.1]{Milstein2004}. 
    
    Indeed, the coefficients of \eqref{eq:hamsystem_trunc} are all at least continuously differentiable and have compact support, and so, the coefficients are globally Lipschitz. 
    Moreover, the compact support in combination with the continuity assumptions ensures the requisite boundedness. 
    Therefore, by \cite{Milstein2004, hong2023symplectic}, strong and weak convergence of the implicit midpoint method applied to  system~\eqref{eq:hamsystem_trunc} follow. 
    In detail, there exists a constant~$\kappa$ such that 
       \begin{align*}
        \sup_{0 \leq n \leq N} \E[\|(Q^R(hn),P^R(hn))-(Q_n^R,P^R_n)^2\|]^{1/2} \leq \kappa h^{1/2}, 
    \end{align*}
    where $(Q_n^R,P_n^R)$ denotes the $n$-th step of the integrator applied to the truncated system \eqref{eq:hamsystem_trunc}.
    Further, \cite{Milstein2004, hong2023symplectic}, ensures that for any test function $C^{4} \ni \phi:T^*G \to \R  $ with the property that it and its derivative have at most polynomial growth, it holds that there is a constant~$\kappa>0$ (possibly depending on $\phi$) such that 
    \begin{align*}
        \sup_{0 \leq n \leq N} \left|\E[\phi(Q^R(hn),P^R(hn))-\phi(Q_n^R,P^R_n)]\right| \leq \kappa h,
    \end{align*}
    meaning that the implicit midpoint method applied to the truncated system \eqref{eq:hamsystem_trunc} converges with root mean squared order $1/2$ and weak order $1$. 
   \end{proof}
\begin{remark}
    It is possible to prove convergence of the midpoint method applied to the full Hamiltonian system. 
     Indeed, one can apply \Cref{lem:numbound} to prove convergence for the implicit midpoint method. 
    Since, as in the proof of \Cref{lem:upstairs_existence}, it holds by the Casimir preservation
    that $(Q^R_n,P^R_n) = (Q_n,P_n)$ and $(Q^R,P^R) = (Q,R)$. 
    This is, however, not necessary to prove the convergence of the stochastic isospectral midpoint integrator. 
\end{remark}
We are now ready to prove the convergence of the isospectral midpoint method \eqref{eq:stochastic_midpoint_lp}. 
\begin{theorem}
    \label{th:convergence}
    Let $H_0$ be of class $C^4$ and let $H_1,H_2, \ldots, H_M$ be of class $C^5$.
        Consider the stochastic Lie--Poisson integrator $X_{n+1} = \Psi_h(X_n) = \mu(\Phi_h(Q_n,P_n))$  applied to the system~\eqref{eq:sys_strat}. 
      Let $T >0$ be an arbitrary bounded final time and let $N$ be the  number of steps needed to reach the final time with constant step size $h = T/N$.
      Then, for any deterministic initial condition $X_0 \in \mathfrak{g}^*$, there is a constant $\kappa >0$ such that 
        \begin{align*}
             \sup_{0 \leq n \leq N} \E\left[ \|X(hn)-X_n\|^2\right]^{1/2} \leq \kappa h^{1/2},
        \end{align*}
        i.e., the root mean squared order of convergence is $1/2$. 
        Further, for any  $C^4 \ni \phi: \mathfrak{g}^* \to \R$ with bounded derivatives, it holds that 
        \begin{align*}
             \sup_{0 \leq n \leq N} \left|\E[\phi(X(hn))-\phi(X_n)]\right| \leq \kappa h,
        \end{align*}
        i.e., the weak order of convergence is $1$. 
\end{theorem}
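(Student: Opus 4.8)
The plan is to push the strong and weak estimates of \Cref{lem:upstairs_convergence} from the cotangent bundle $T^*G$ down to $\mathfrak{g}^*$ through the momentum map, exploiting the identity $\Psi_h = \mu\circ\Phi_h$ together with the fact that the exact reduced flow is the $\mu$-image of the exact canonical flow. Concretely, I would fix a deterministic lift $(Q_0,P_0)\in T^*G$ of the initial datum, i.e.\ $\mu(Q_0,P_0)=X_0$, and run both the exact canonical system \eqref{eq:hamsystem} and the midpoint scheme \eqref{eq:stoch_midpoint_hamiltonian} from this point. By Lie--Poisson reduction the exact solution of \eqref{eq:sys_strat} satisfies $X(hn)=\mu(Q(hn),P(hn))$, and by the construction of \eqref{eq:stochastic_midpoint_lp} one has $X_n=\mu(Q_n,P_n)$ at every step; hence the whole error analysis reduces to controlling $\mu(Q(hn),P(hn))-\mu(Q_n,P_n)$.

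The second ingredient is that the truncated and untruncated systems coincide along both flows. By \Cref{lem:upstairs_existence} the exact canonical solution stays in the compact orbit $O_{(Q_0,P_0)}$, and by \Cref{lem:numbound} so does the numerical solution; on this orbit the truncated Hamiltonians agree with $H_0,\dots,H_M$. Hence the estimates of \Cref{lem:upstairs_convergence}, proved for the truncated system, apply verbatim to \eqref{eq:hamsystem}, yielding
\begin{align*}
    \sup_{0\leq n\leq N}\E[\|(Q(hn),P(hn))-(Q_n,P_n)\|^2]^{1/2}\leq \kappa h^{1/2}.
\end{align*}
For the strong estimate I would then use that $\mu$, being a fixed quadratic map, is Lipschitz on the compact orbit $O_{(Q_0,P_0)}$ with some constant $L$; since both arguments lie in this orbit, $\|X(hn)-X_n\|\leq L\|(Q(hn),P(hn))-(Q_n,P_n)\|$, and taking mean-square norms gives the claimed order $1/2$.

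For the weak estimate I would compose the test function with the momentum map, setting $\tilde{\phi}=\phi\circ\mu\colon T^*G\to\R$. Because $\mu$ is quadratic and $\phi\in C^4$ with bounded derivatives, $\tilde{\phi}$ is $C^4$ with derivatives of at most polynomial growth, so it meets the hypotheses of the weak part of \Cref{lem:upstairs_convergence}; applying that estimate to $\tilde{\phi}$ and using $\phi(X(hn))=\tilde{\phi}(Q(hn),P(hn))$ and $\phi(X_n)=\tilde{\phi}(Q_n,P_n)$ yields weak order $1$. The main obstacle I anticipate is the bookkeeping at the interface between the two levels: one must verify carefully that the exact reduced flow really is the $\mu$-image of the exact canonical flow, so that the reduction introduces no additional error, that the downstairs scheme is genuinely $\mu\circ\Phi_h$ for the same lift at every step, and that the compactness supplied by Casimir preservation is what simultaneously makes $\mu$ Lipschitz, controls the polynomial-growth derivatives of $\tilde{\phi}$, and licenses replacing the full system by the truncated one. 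Everything else is a routine transfer through a smooth, compactly-restricted map.
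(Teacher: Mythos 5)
Your proposal is correct and follows essentially the same route as the paper's proof: both rest on \Cref{lem:upstairs_convergence} for the truncated canonical system, the Casimir-induced compactness from \Cref{lem:upstairs_existence,lem:numbound} to identify the truncated and full dynamics, and the smoothness/Lipschitz property of $\mu$ on the relevant compact set to transfer the strong estimate and the composition $\phi\circ\mu$ to transfer the weak estimate. The only (immaterial) difference is ordering: you identify truncated with full upstairs on $T^*G$ before pushing the estimates down through $\mu$, whereas the paper pushes the truncated estimates down to $\mathfrak{g}^*$ first and performs the identification $X^R_t = X_t$, $X^R_n = X_n$ there.
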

\begin{proof}
    Consider the solution $X^R_t$ of the truncated stochastic Lie--Poisson system \eqref{eq:sys_strat_trunc}. 
    Note that  $X^R_t =\mu(Q^R_t,P^R_t)$ and that 
    $X^R_n = \mu(Q^R_n,P^R_n)$, where $(Q^R_n,P^R_n)$ is the $n$-th step of the integrator \eqref{eq:stoch_midpoint_hamiltonian} applied to the system \eqref{eq:hamsystem_trunc}. 
    For the mean square convergence of the truncated system, note that $\mu$ restricted to the compact tube $\mu^{-1}(B_R)$ is a smooth function mapping from a compact set into another compact set, meaning in particular that it is a Lipschitz map.
    Therefore, for any $n$, there is a constant $\kappa>0$ such that 
    \begin{align*}
        \|X^R(hn)-X^R_n\| = \|\mu(Q^R(hn),P^R(hn))-\mu(Q^R_n,P^R_n)\| \leq \kappa\|(Q^R(hn),P^R(hn))-(Q^R_n,P^R_n)\|,
    \end{align*}
    after which we apply 
   \Cref{lem:upstairs_convergence} to see that there is a constant $\kappa>0$ such that 
    \begin{align*}
        \sup_{0 \leq n \leq N} \E\left[ \|X^R(hn)-X_n^R\|^2\right]^{1/2}  \leq \sup_{0 \leq n \leq N} \E\left[ \|(Q^R(hn),P^R(hn))-(Q^R_n,P^R_n)\|^2\right]^{1/2} \leq \kappa h^{1/2}.  
    \end{align*}

    To prove weak convergence for the truncated system, note that $\mu$ restricted to the compact tube $\mu^{-1}(B_R)$ is a smooth function mapping from a compact set into another compact set, and therefore it and its derivatives are bounded.
    Thus, $\phi \circ \mu: T^*G \to \R$ is a $C^4$ function with bounded derivatives, and we apply \Cref{lem:upstairs_convergence} to obtain that there is a constant $\kappa>0$ such that 
    \begin{align*}
        \sup_{0 \leq n \leq N} \left|\E[\phi(X^R(hn))-\phi(X^R_n)]\right|
        = \sup_{0 \leq n \leq N} \left|\E[\phi\circ \mu(Q^R(hn),P^R(hn))-\phi\circ \mu(Q_n^R,P^R_n)]\right|
        \leq \kappa h. 
    \end{align*}
    Having thus established the root-mean-square and weak convergence of the method \eqref{eq:stochastic_midpoint_lp} applied to the truncated system, the theorem follows by noting that just as in the proof of \Cref{lem:upstairs_convergence}, $X^R_t = X_t$.
    Further, by \Cref{lem:numbound}, $X^R_n$ remains in the ball~$B_R$, meaning that $ T_i(X^R_n) = H_i(X^R_n)$, for all $i = 0, \ldots, M$.
    This implies in particular that  $X^R_n = X_n$, which completes the proof. 
\end{proof}

\section{Examples and numerical experiments}
\label{sec:examples}
In this section, we present various examples of stochastic Lie--Poisson systems and verify the theoretical results of the stochastic integrator with several numerical experiments. We consider the three-dimensional rigid body motion and its $n$-dimensional generalization, point vortex dynamics on the sphere, and the two-dimensional Euler equations for inviscid fluid flow on the sphere. For each example, we demonstrate Casimir preservation and strong convergence. Weak convergence is only shown numerically for the three-dimensional rigid body. This is due to the large number of simulations needed, meaning that the computational resources required in higher-dimensional systems is prohibitively large.

The noise generated for all realizations is truncated to implement the stochastic midpoint scheme. Throughout the numerical experiments, the truncation threshold is chosen as $A_h = \sqrt{4|\ln(h)|}$ \cite{milstein2002numerical}. Furthermore, the numerical realizations obtained with time step size $h=2^{-16}$ are used as reference results in the convergence tests for each system. These results are compared to the realizations obtained at a range of coarser time steps $h=2^{-7}, \ldots, 2^{-12}$ for strong convergence and $h=2^{-7}, \ldots, h=2^{-13}$ for weak convergence.

The implicit step of the stochastic integrator is solved via fixed point iteration. In all performed numerical experiments, the tolerance of the iterative process is set to $10^{-15}$ and is measured in the infinity norm.

    \subsection{The rigid body equations}
    The rigid body is a canonical example among mechanical systems with conservation laws. It describes the equations of motion of a three-dimensional rigid body rotating about a fixed point without external forcing or damping. The deterministic Hamiltonian is given by \begin{equation}
        H_0(X) = \frac{1}{2}\trace((\mathcal{I}^{-1}X)^*X),
        \label{eq:ham_rigid_body}
    \end{equation}
    where $X \in \mathfrak{so}(3)$ and $\mathcal{I}:\mathfrak{so}(3)\to\mathfrak{so}(3)$ denotes the moment of inertia tensor. The stochastic Lie--Poisson equations read \begin{equation}
        \begin{split}
            \dd X &= -\left[\mathcal{I}^{-1}X, X \right]\, \dd t + \sum_{i=1}^M\left[\nabla H_i(X)^*, X \right]\circ\dd W_t^i, \\
            X(0) &= X_0.
        \end{split}
        \label{eq:rigid_body}
    \end{equation}
    The hat-map yields an isomorphism between $\R^3$ and $\mathfrak{so}(3)$. For $x = (x_1, x_2, x_3)^T\in\R^3$, the hat-map is defined as
    \begin{equation}
        \hat{x} = \begin{bmatrix} 0 &-x_3& x_2 \\ x_3& 0& -x_1 \\ -x_2 & x_1 & 0
        \end{bmatrix}.
    \end{equation}
    All realizations of the rigid body presented here make use of the inertia tensor defined through the hat map and its inverse. The inertia tensor may be represented as a diagonal $3\times 3$ matrix, denoted by $I=\Diag(I_1, I_2, I_3)$. 
    Its action on an element $\hat{x}\in\mathfrak{so}(3)$ then takes the form of $\widehat{Ix}$, where ordinary matrix-vector multiplication between $I$ and $x$ applies. 
    
    The adopted initial condition is $X_0=\hat{x}_0$, where $x_0 = (\sin(1.1), 0, \cos(1.1))^T$ and an inertia tensor given by $\Diag(2, 1, 2/3)$. 
    Three noise Hamiltonians are added in the stochastic system. 
    For $i=1,2,3$ these noise Hamiltonians $H_i$ are chosen such that $\nabla H_i(X) = 0.1\widehat{x_i \mathbf{e}_i}$, where $\mathbf{e}_i$ is the standard $i$-th basis vector of $\R^3$.

    Conservation of the eigenvalues in the stochastic system is demonstrated using a long-time simulation. A time step size of $h=2^{-8}$ time unit is adopted to simulate a total of 1000 time units, or approximately $2.5\times 10^5$ time steps. 
    As shown in  \Cref{fig:rigbod_pres}, the eigenvalues are preserved up to machine precision during the simulation and the Hamiltonian changes over time. 
    Strong convergence of the integrator is illustrated in the left panel of \Cref{fig:rigbod_strong}.
    A total of 500 realizations are carried out for each time step size, simulating for $0.1$ time units. We observe a strong order of convergence of $1/2$, in agreement with \Cref{th:convergence}. 

    A weak convergence test has been carried out by simulating $10^7$ realizations for each time step size. The solutions are compared after $0.1$ time units with the test function proposed by \cite{brehier2023splitting}, namely $\phi(X)=\sin (2\pi x_1) + \sin(2\pi x_2) + \sin(2\pi x_3)$. The results are depicted in the right panel of \Cref{fig:rigbod_strong} and suggest first-order weak convergence. 
    Here, the number of Monte Carlo samples in the demonstration of the weak convergence needs to be chosen much larger than in the strong convergence, since the Monte Carlo error is additive and easily dominates the other error contributions, while it just contributes multiplicatively to the error constant in the strong error simulations, see \cite{lang2018monte}.

        \begin{figure}
        \centering
        \includegraphics[width=1\textwidth]{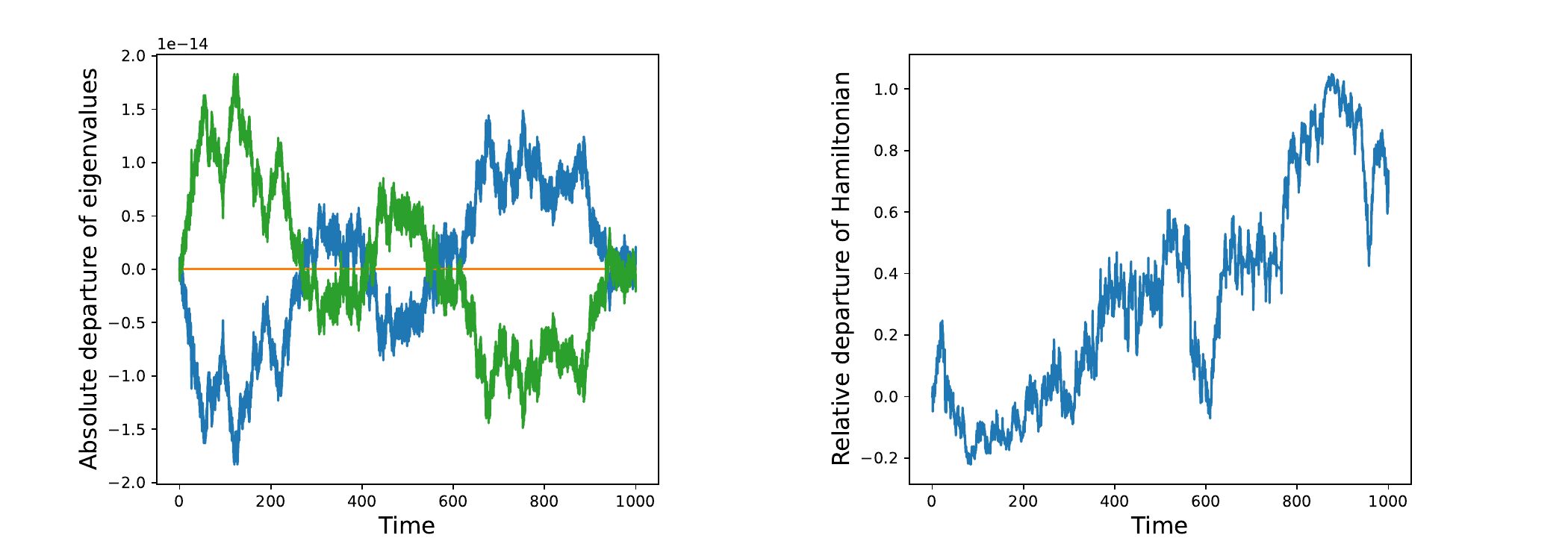}
        \caption{A single realization of the stochastic rigid body. Left: absolute departure of the eigenvalues of the solution matrix $X$, measured from the initial eigenvalues. Right: relative departure of the Hamiltonian, normalized by the initial value. 
        }
        \label{fig:rigbod_pres}
    \end{figure}

    \begin{figure}
        \centering
        \includegraphics[width=0.48\textwidth]{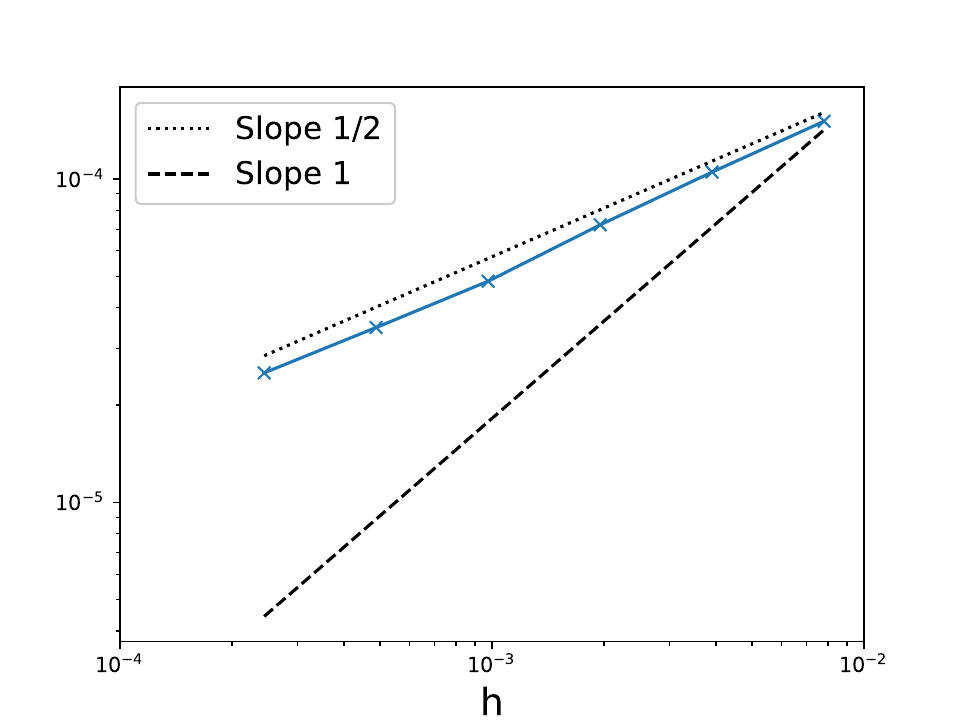}
        \includegraphics[width=0.48\textwidth]{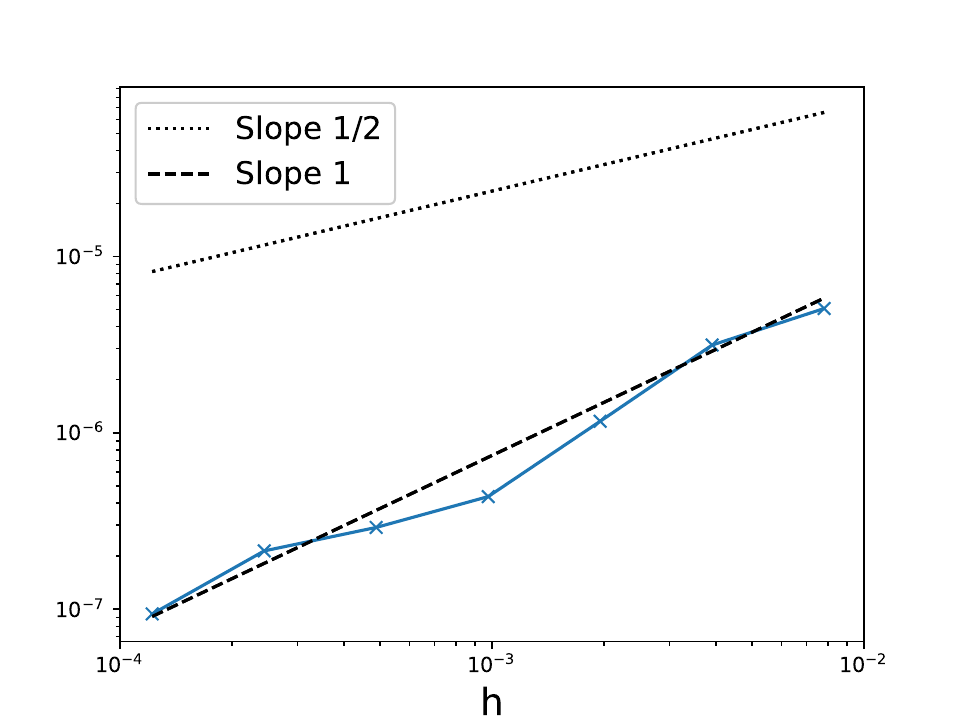}
    \caption{Left: Strong errors for the rigid body system using 500 independent realizations. Right: Weak error for the rigid body system based on $10^7$ independent realizations. 
    }
    \label{fig:rigbod_strong}
\end{figure}
    
    \subsection{Manakov system}
    The Manakov system describes the motion of an $n$-dimensional rigid body rotating about a fixed point \cite{ratiu1980motion, manakov1976note}. This system is described by the same Hamiltonian  and Lie--Poisson equations as the three-dimensional rigid body, the only changes being that the dual of the Lie algebra is identified with $\mathfrak{so}(n)$. As a result, $X\in\mathfrak{so}(n)$ and the moment of inertia tensor is defined as a map $\mathcal{I}: \mathfrak{so}(n)\to\mathfrak{so}(n)$.

    The inertia tensor is chosen here such that for the upper triangular part of $X$ \begin{equation}
        (\mathcal{I}X)_{ij} = \left(\sum_{k=1}^{i-1}(n-k)+(j-i)\right)X_{ij}, \quad i=1,\ldots, n;\quad  j=i+1, \ldots, n,
    \end{equation}
    and the lower triangular part follows from skew-symmetry of $X$. This system admits a total of $n(n-1)/2$ degrees of freedom. In the numerical experiments, $n(n-1)/2$ noise Hamiltonians are selected such that \begin{equation}
        \nabla H_{ij}(X) = \begin{cases} 0.1 X_{kl} &\text{ if } (k, l) \in \{ (i, j), (j, i) \} \\
        0 &\text{ elsewhere,}
        \end{cases}
    \end{equation}
    where $i=1,\ldots, n$ and $j=i+1,\ldots,n$.
    
    The conservation of eigenvalues for the stochastic Manakov system is shown in \Cref{fig:manakov_pres}. Here, $n=10$ and the system is simulated for 1000 time units. The strong error is again computed by comparing 500 realizations after $0.1$ time units. As may be seen in \Cref{fig:manakov_strong}, strong convergence of order 1/2 is observed. These results illustrate that the integrator can readily be extended to higher-dimensional systems without loss of accuracy or of conservation properties.

    \begin{figure}
        \centering
        \includegraphics[width=1\textwidth]{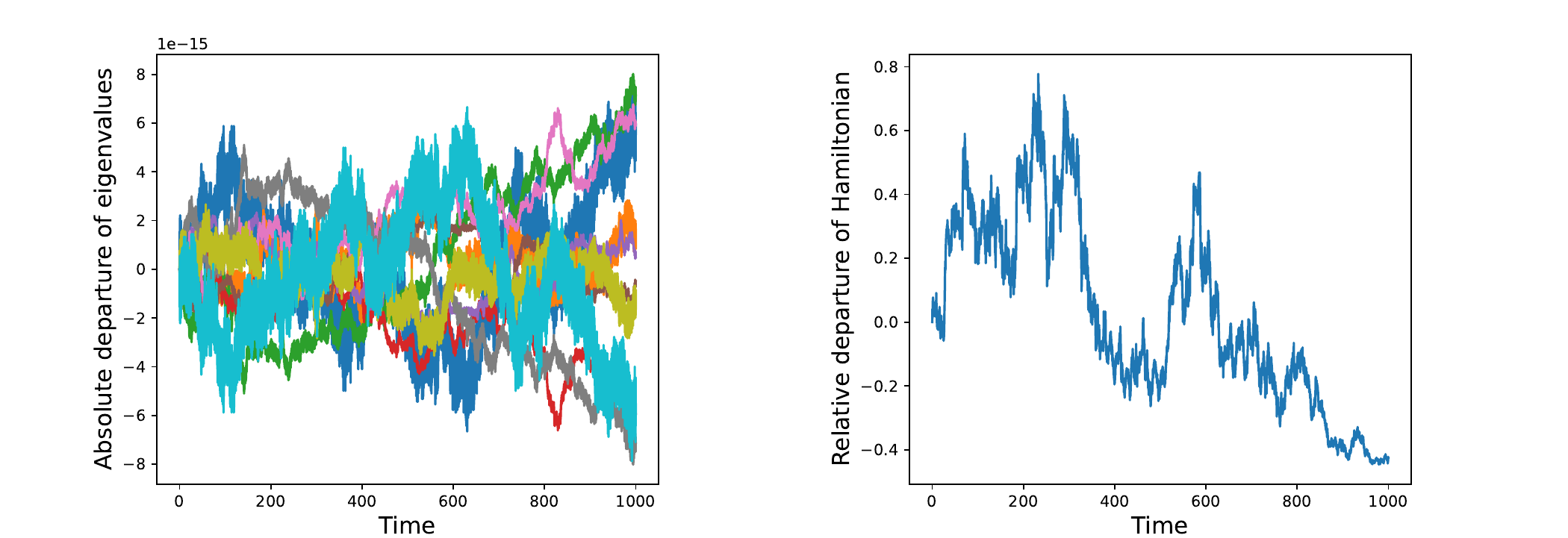}
        \caption{A single realization of the stochastic Manakov system. Left: absolute departure of the eigenvalues of the solution matrix $X$, measured from the initial eigenvalues. Right: relative departure of the Hamiltonian, normalized by the initial value.}
        \label{fig:manakov_pres}
    \end{figure}
    \begin{figure}
        \centering
        \includegraphics[width=0.5\textwidth]{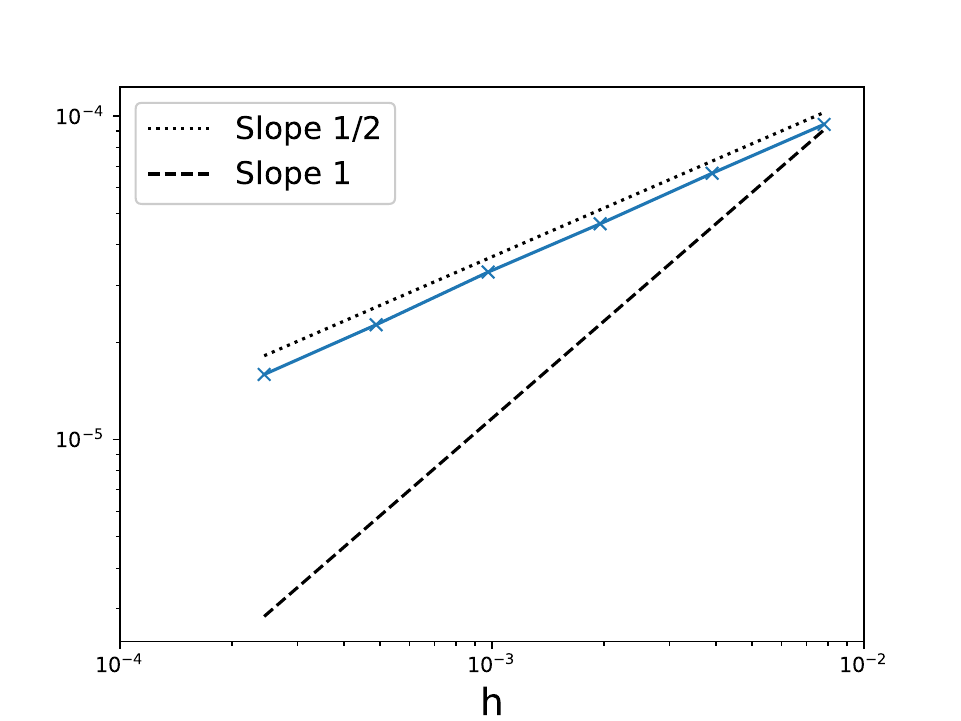}
        \caption{Strong errors for the Manakov system using 500 independent realizations.}
        \label{fig:manakov_strong}
    \end{figure}

    \subsection{Point vortex dynamics on the sphere}
    Point vortex dynamics describe the evolution of vorticity concentrated in a finite number of points. Specifically, Helmholtz demonstrated that these dynamics yield a solution to the two-dimensional Euler equations \cite{helmholtz1858integral}. Point vortex systems have been applied to study and predict the formation of coherent structures in two-dimensional fluid flows through statistical mechanics \cite{onsager1949statistical}. Other studies have used this system to investigate the trajectories of large-scale structures in two-dimensional turbulent flows using simulation \cite{modin2020casimir} and analysis \cite{modin2021integrability}. Finally, point vortices can also serve as a discretization for two-dimensional fluid mechanical systems \cite{chorin1973discretization}. 
    
    The Hamiltonian for the point vortex system on the sphere reads
    \begin{equation}
        H_0(X_1, \ldots, X_n) = -\frac{1}{4\pi} \sum_{j=1}^n \sum_{i=1}^{j-1} \Gamma_i \Gamma_j \log\left(1- \frac{\trace(X_i^* X_j)}{\|X_i \|^2 \|X_j \|^2}\right),
    \end{equation}
    where $\Gamma_i, i=1-e,\ldots, n$, denote the intensities of the corresponding point vortices. Each $X_i, i=1, \ldots, n$, is an element of $\mathfrak{su}(2)^*$ and can be thought of as a vector in $\R^3$ through an isomorphic mapping between the former two spaces. As such, each $X_i$ represents the position of the corresponding point vortex. 
    
    The dynamics of four point vortices are simulated in the reported numerical experiments. The vortex intensities $\Gamma_i$ are all chosen to be 1. We denote the position of the $j$-th coordinate in $\R^3$ of the $i$-th point vortex by $x_{ij}$ and adopt three noise Hamiltonians defined by \begin{equation}
        \nabla H_i(X_j) = 0.1 x_{ij}, \quad j=1, 2, 3, \quad i=1,\ldots, n.
    \end{equation}
    As shown in \Cref{fig:pvx_pres}, the eigenvalues of the solution matrix $X$ are preserved accurately over long simulation times. The strong errors, depicted in \Cref{fig:pvx_strong}, indicate a strong order of convergence of $1/2$.

    \begin{figure}
        \centering
        \includegraphics[width=1.0\textwidth]{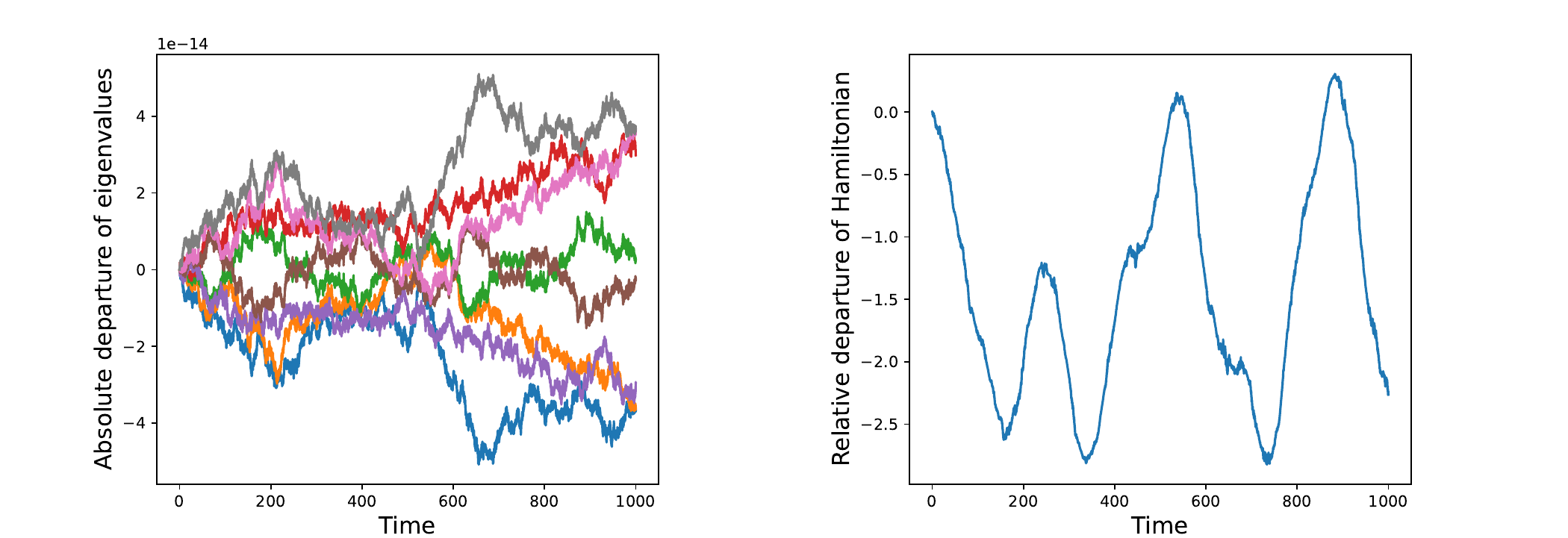}
        \caption{A single realization of the stochastic point vortex dynamics with 4 point vortices. Left: absolute departure of the eigenvalues of the solution matrix $X$, measured from the initial eigenvalues. Right: relative departure of the Hamiltonian, normalized by the initial value.}
        \label{fig:pvx_pres}
    \end{figure}
    \begin{figure}
        \centering
        \includegraphics[width=0.5\textwidth]{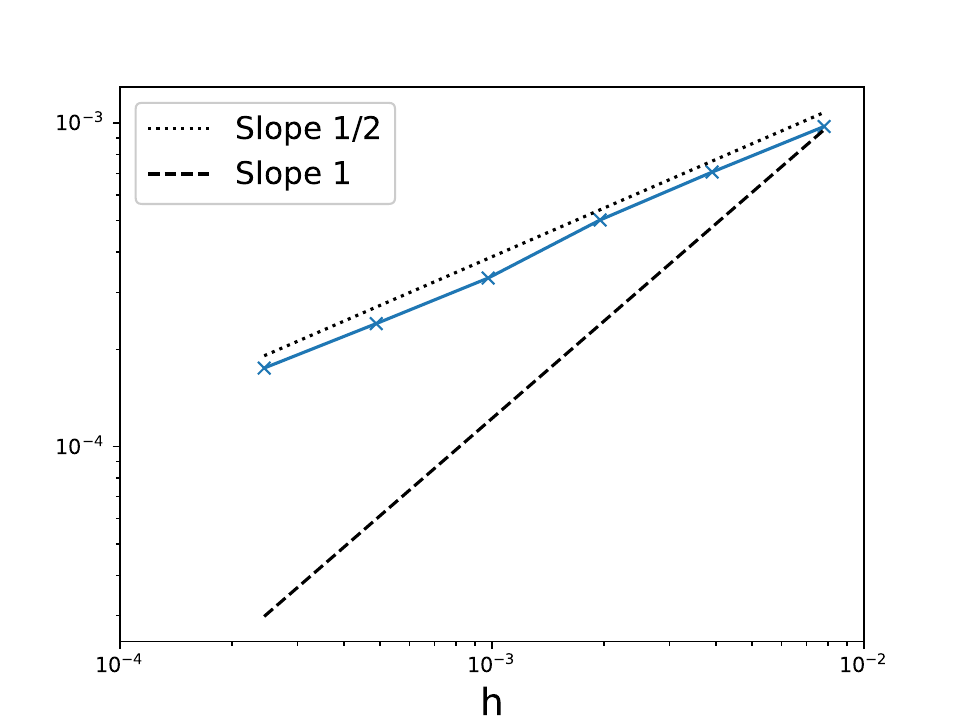}
        \caption{Strong errors for the point vortex system using 500 independent realizations.}
        \label{fig:pvx_strong}
    \end{figure}
    
    \subsection{Zeitlin--Euler equations}
    The Zeitlin--Euler equations provide a spatial discretization of the two-dimensional Euler equations through a consistent finite-dimensional truncation of the dynamics \cite{zeitlin1991finite, zeitlin2004self, modin2020casimir}. The Euler equations describe an incompressible inviscid fluid, with the evolution fully determined by the vortex dynamics \cite{zeitlin2018geophysical}, and are part of a larger class of geophysical fluid-dynamical models~\cite{holm2021stochastic}. This particular model has recently been adopted as discretization method of the convective term in the two-dimensional Navier--Stokes equations~\cite{cifani2023efficient} to provide numerical evidence of a double scaling law in two-dimensional turbulence \cite{cifani2022casimir}.

    A finite-dimensional isospectral Hamiltonian approximation of the two-dimensional Euler equations is obtained via geometric quantization \cite{hoppe1989diffeomorphism, bordemann1994toeplitz, bordemann1991gl}. In this approach, smooth functions are approximated by skew-Hermitian matrices and the Poisson bracket of functions is replaced by the matrix commutator. The resulting finite-dimensional system can be regarded as a discretization of the Euler equations on the sphere and reads
    \begin{equation}
        \begin{split}
        \dot{X} &= \left[\Delta_N^{-1} X, X\right], \\
        X(0) &= X_0,
        \end{split}
    \end{equation}
    where $\Delta_N$ is the discrete Laplacian \cite{hoppe1998some}. Analogous to the continuous setting, the eigenfunctions of the discrete Laplacian provide a basis for $\mathfrak{su}(N)$. These functions are the quantized counterparts of the spherical harmonic functions and are denoted by $T_{lm}$. Here, $l=0, \ldots, N$ is the degree and $m=-l, \ldots, l$ is the order of the spherical harmonic. The variable $N$ is the spectral resolution, 
    and the system has the property that the continuous Euler equations are approximated as $N\to \infty$. The matrix $X$ is referred to as the vorticity matrix with the property $X\in\mathfrak{su}(N)^*$. The Hamiltonian of this system is given by \begin{equation}
        H_0(X) = \frac{1}{2}\trace\left((\Delta_N^{-1} X)^* X\right).
    \end{equation}
    The stochastic isospectral system thus reads \begin{equation}
        \begin{split}
            \dd X &= \left[\Delta_N^{-1} X, X \right]\, \dd t + \sum_{l=0}^N\sum_{m=-l}^l \left[\nabla H_{lm}(X)^*, X\right]\circ\dd W_t^{lm}, \\
            X(0) & = X_0.
        \end{split}
    \end{equation}
    
    A single long-time simulation was carried out at $N=128$ with step size $h \approx 1.6\times 10^{-3}$ for a total of 32000 steps. The noise Hamiltonians used in this experiment are given by 
    \begin{equation}
        \nabla H_{lm}(X) = \frac{0.1}{l(l+1)} \trace(T_{lm}^* X), \quad l=N/2, \ldots, N, \quad m=-l, \ldots, l.
    \end{equation}
    The departure of the eigenvalues of $X$ are depicted in \Cref{fig:euler_pres} along with the departure of the enstrophy and the Hamiltonian. For clarity, we have opted to display only the maximum of the measured absolute departure of the eigenvalues. It is apparent that the eigenvalues are preserved during the entire simulation interval. The enstrophy is a well-known Casimir function in geophysical fluid dynamical models and defined as the integrated square vorticity. In the Euler--Zeitlin model, the enstrophy is defined as $C_2(W):=\trace(W^2)$. This quantity can also be seen to be conserved up to high precision. Finally, the modest magnitudes of the noise Hamiltonians are found to induce a small relative departure of the total Hamiltonian.

    Strong convergence of the integrator for the stochastic Euler--Zeitlin equations is shown at a resolution $N=12$. A total of 500 realizations are compared after $0.1$ time units, using the same time step sizes as in the other numerical examples. For this test, we adopt the noise Hamiltonians defined by
    \begin{equation}
        \nabla H_{lm}(X) = \frac{2}{l(l+1)} \trace(T_{lm}^* X), \quad l=N/2, \ldots, N, \quad m=-l, \ldots, l.
    \end{equation}
    to ensure a sufficiently large contribution of the stochastic terms to the dynamics of the system. As shown in \Cref{fig:euler_strong}, the strong order of convergence is found to be $1/2$.
    
    \begin{figure}
        \centering
        \includegraphics[width=1.0\textwidth]{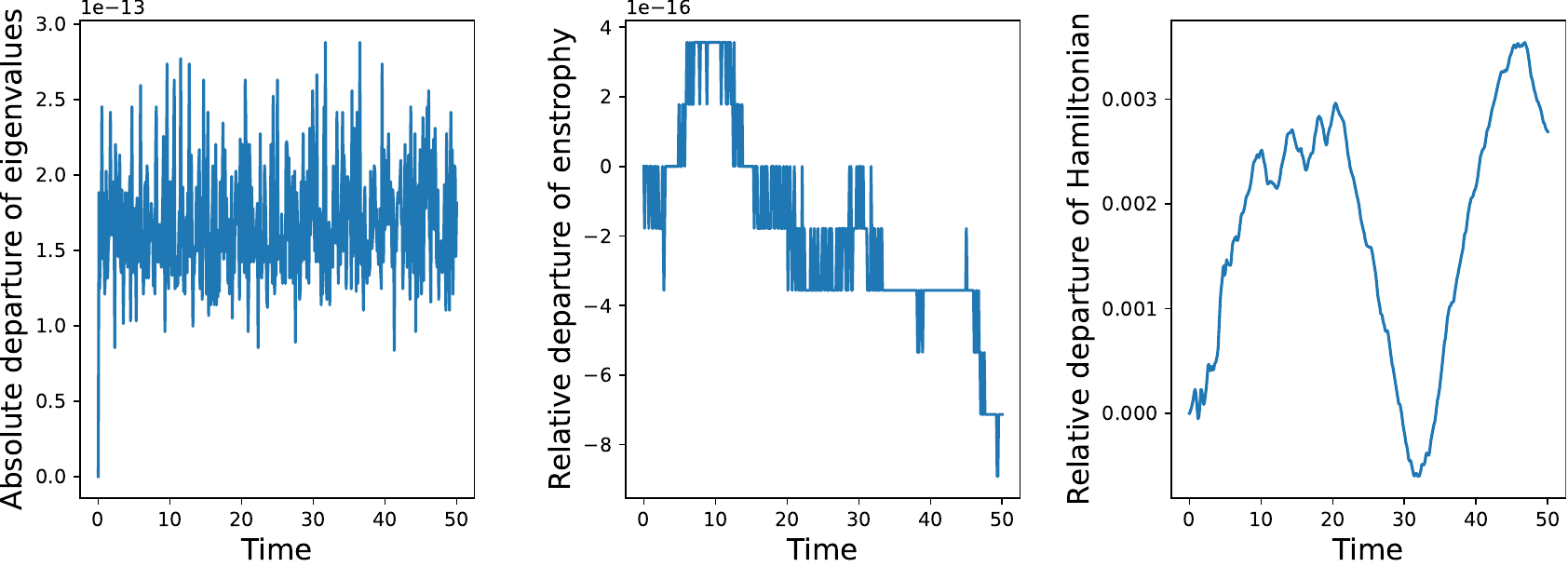}
        \caption{A single realization of the stochastic Euler--Zeitlin system. Left: maximum of the measured absolute departure of the eigenvalues of the solution matrix $X$, compared to the initial eigenvalues. Center: relative departure of the enstrophy, normalized by the initial value.
        Right: relative departure of the Hamiltonian, normalized by the initial value.}
        \label{fig:euler_pres}
    \end{figure}
    \begin{figure}
        \centering
        \includegraphics[width=0.5\textwidth]{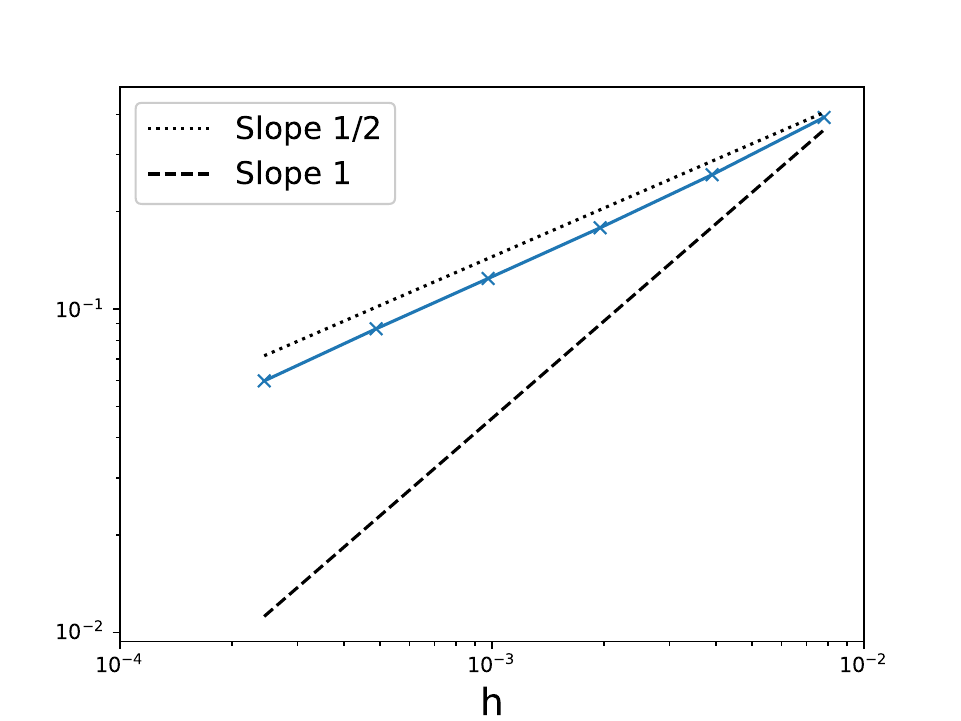}
        \caption{Strong errors for the Euler--Zeitlin system using 500 independent realizations.}
        \label{fig:euler_strong}
    \end{figure}

\section{Conclusion and outlook}
\label{sec:conclusions}
In this paper, we derived a numerical integration scheme for stochastic Lie--Poisson (LP) equations that admit an isospectral formulation. The integrator preserves the coadjoint orbits and the Casimir functions to machine precision. The LP integrator is derived through discrete Lie--Poisson reduction of the symplectic midpoint method for stochastic Hamiltonian systems. The geometric approach used in the derivation of the integrator facilitated proving the desired conservation properties as well as convergence rates. Specifically, the LP integrator converges with root mean squared order $1/2$ and weak order $1$. These properties were further illustrated in a series of numerical experiments.

The key benefit of the presented integration scheme is its scalability to high-dimensional LP systems. No computationally expensive algebra-to-group maps are used, enabling the use of the integrator for high-dimensional systems and thereby widening the range of possible applications. Examples include performing numerical studies of complex physical systems, such as stochastically forced fluid flows to study scaling limits \cite{flandoli2020convergence, flandoli2021scaling}, and spin systems \cite{mclachlan2014symplectic} to perform structure-preserving simulations with uncertainty quantification.

\section*{Acknowledgments}
We wish to express our gratitude to David Cohen, Klas Modin, Michael Roop and Milo Viviani for the valuable input and insightful discussions that helped improve the paper.
The computations were enabled by resources provided by Chalmers e-Commons at Chalmers.
This work was supported in part by the Swedish Research Council (VR) through grant no.\ 2020-04170 and grant no. 2022-03453, by the Wallenberg AI, Autonomous Systems and Software Program (WASP) funded by the Knut and Alice Wallenberg Foundation, by the Chalmers AI Research Centre (CHAIR), by the European Union (ERC, StochMan, 101088589), and by the Dutch Science Foundation (NWO) grant VI.Vidi.213.070. 
\bibliographystyle{plain}
\bibliography{refs}

\appendix
\section{Reconstruction of the stochastic Hamiltonian system}\label{app:reconstruction_stoch_hamiltonian}
In this appendix, we elaborate on the reconstruction of the stochastic Hamiltonian system~\eqref{eq:hamsystem} from the stochastic Lie--Poisson system \eqref{eq:sys_strat}. We recall the stochastic Lie--Poisson system
\begin{equation*}
\dd X_t =[\nabla H_0(X_t)^*,X_t] \, \dd t	+ \sum_{k=1}^M [\nabla H_k(X_t)^*,X_t] \circ \dd W_t^k,
\end{equation*}
where $X_t = \mu(Q_t, P_t)$. The variable $X_t\in \mathfrak{g}^*$ is a random variable, thus $Q_t$ and $P_t$ are also random variables. Since $X_t\in \mathfrak{g}^*$ for all noise realizations by construction, the reconstruction equations can be applied. The reconstruction equations to obtain an integral curve in $T^* G$ read
\begin{align*}
    \dd Q_t &= Q_t\left(\nabla H_0(X_t) \, \dd t + \sum_{k=1}^M \nabla H_k(X_t)\circ\text{d}W_t^k \right),\\
    P_t &= T_{Q_t}^*L_{Q_t^{-1}}X_t = (Q_t^{-1})^*X_t.
\end{align*}

Here $T_{Q_t}^* L_{Q_t^{-1}}X_t$ is the cotangent lift of the left action of the group. We refer to \cite{holm2009geometric, marsden2013introduction} for a full description of these equations. For completeness, we show that $T_{Q_t}^*L_{Q_t^{-1}}X_t = (Q_t^{-1})^*X_t$ in the definition of $P_t$. We let $c(s)\in G$ be a curve with $c(0)=Q_t$ and $\frac{\dd c(s)}{\dd s}|_{s=0}=v\in T_{Q_t}G$. 
Note that $Q_t \in G$ for all noise realizations, since $X_t \in \mathfrak{g}^*$ for all noise realizations. Thus, the curve $c(s)$ has a stochastic initial condition $c(0)=Q_t$ but is a deterministic function of $s$. Once $Q_t$ is known, the derivative w.r.t. $s$ at $s=0$ is therefore computed as normally.
We thus find  
\begin{align*}
\begin{split}
    \langle T_{Q_t}^* L_{Q_t^{-1}} X_t, v \rangle &= \langle X_t, T_{Q_t}L_{Q^{-1}_t}v\rangle \\
    &= \left\langle X_t, \frac{\dd}{\dd s}\left(Q^{-1}_t c(s)\right)\Big|_{s=0} \right\rangle = \langle X_t, Q^{-1}_t v\rangle = \langle (Q^{-1})^*X_t, v\rangle, 
\end{split}
\end{align*}
establishing the definition of $P_t$. The evolution of $P_t$ follows from 
\begin{align*}
    \begin{split}
        \dd P_t =& ~\dd (Q_t^{-1})^* X_t + (Q_t^{-1})^*\dd X_t = -(Q_t^{-1})^*\dd Q_t^* (Q_t^{-1})^* X_t + (Q_t^{-1})^* \dd X_t \\
        =& -(Q_t^{-1})^* \left(\nabla H_0(X_t) \dd t + \sum_{k=1}^M \nabla H_k(X_t)\circ\text{d}W_t^k \right)^* Q_t^* (Q_t^{-1})^* X_t \\
        & + (Q_t^{-1})^* [\nabla H_0(X_t)^*,X_t] \, \dd t	+ \sum_{k=1}^M [\nabla H_k(X_t)^*,X_t] \circ \dd W_t^k, \\
        =& -(Q_t^{-1})^* \left(\nabla H_0(X_t) \dd t + \sum_{k=1}^M \nabla H_k(X_t)\circ\text{d}W_t^k \right)^* X_t \\ 
        & + (Q_t^{-1})^*\left[\nabla H_0(X_t)^* \dd t + \sum_{k=1}^M \nabla H_k(X_t)^*\circ\text{d}W_t^k, X_t\right] \\
        =& -(Q_t^{-1})^* X_t \left(\nabla H_0(X_t) \dd t + \sum_{k=1}^M \nabla H_k(X_t)\circ\text{d}W_t^k \right)^* \\
        =& -P_t \left(\nabla H_0(X_t) \dd t + \sum_{k=1}^M \nabla H_k(X_t)\circ\text{d}W_t^k \right)^*,
    \end{split}
\end{align*}
thereby obtaining the stochastic Hamiltonian system \eqref{eq:hamsystem}.

\end{document}